\definecolor{darkblue}{rgb}{0.0, 0.0, 0.55}
\newtheorem{theorem}{Theorem}[section]
\newtheorem{cor}[theorem]{Corollary}
\newtheorem{lemma}[theorem]{Lemma}
\newtheorem{proposition}[theorem]{Proposition}
\newtheorem{definition}[theorem]{Definition}
\newtheorem{remark}[theorem]{Remark}
\newtheorem{Cor}[theorem]{Corollary}
\newcommand{\fp}{\bm p}
\def\Vfp{\mathfrak V}
\def\Vfpj{\mathfrak V(\fp_j)}
\def\cQ{\mathcal Q}
\def\cP{\mathcal P}
\def\C{\mathbb C}
\def\cH{ H}
\def\cD{\mathcal D}
\def\cK{ K}
\def\K{\mathcal K}
\def\cB{\mathcal B}
\def\cX{\mathcal X}
\def\cW{\mathcal W}
\def\fps{\fp_1\,\fp_2\, \cdots \fp_s}
\def\cL{L}
\def\cF{ F}
\def\N{\mathbb N}
\DeclareMathOperator{\bimult}{bimult}
\DeclareMathOperator{\mult}{mult}
\DeclareMathOperator{\rank}{rank}
\newcommand{\df}[1]{{\it{#1}}{\index{#1}}}
\title{Finite Rank Isopairs}
\author[Udeni Wijesooriya]{Udeni Wijesooriya}
\address{Department of Mathematics\\
  University of Florida\\ Gainesville}
  \email{wudeni.pera06@ufl.edu}
\begin{document}

\keywords{commuting isometries, algebraic isopairs, cyclic operators,  distinguished varieties, rational inner functions }
\subjclass[2010]{Primary 47A13, 47B20, 47B32, 47A16; Secondary 14H50, 14M99, 30J05 }

\maketitle

%\begin{abstract}  An algebraic isopair is a commuting pair of  pure isometries  that is annihilated by a  polynomial defining  a distinguished variety $\cV$.  The notion of the rank of a pure algebraic isopair with finite bimultiplicity is introduced. For $\cV $,  a union of $s$ irreducible varieties $\cV_j$, the rank is an $s$-tuple $\alpha=(\alpha_1,...,\alpha_s)$ of natural numbers. A pure algebraic isopair of finite bimultiplicity with rank $\alpha$ is described  as a restriction of a $\max\{\alpha_1,...,\alpha_s\}$-cyclic pure algebraic isopair to a finite codimensional invariant subspace. The restriction of a pure algebraic isopair of finite bimultiplicity with rank $\alpha$ to a finite codimensional invariant subspace is  at least   $\max\{\alpha_1,...,\alpha_s\}$-cyclic and there is a $\max\{\alpha_1,...,\alpha_s\}$-cyclic  finite codimensional invariant subspace.
 
 \begin{abstract} An algebraic isopair is a commuting pair of  pure isometries  that is annihilated by a  polynomial.  The notion of the rank of a pure algebraic isopair with finite bimultiplicity is introduced as an $s$-tuple $\alpha=(\alpha_1,...,\alpha_s)$ of natural numbers. A  pure algebraic isopair of finite bimultiplicity with rank $\alpha$, acting on a Hilbert space is nearly  $\max\{\alpha_1,...,\alpha_s\}$-cyclic and there is a finite codimensional invariant subspace such that the restriction to that subspace  is  $\max\{\alpha_1,...,\alpha_s\}$-cyclic. 
 
\end{abstract}
 
\section{INTRODUCTION} 
 Given a polynomial $p\in \C[z,w]$ (or in $\C[z]$) let $Z(p)$ denote its zero set. \index{$Z(p)$} We say $p$ is \df{square free} if $q^2$ does not divide $p$ for every non-constant polynomial $q(z,w) \in \mathbb{C}[z,w]$. We say $q \in \mathbb{C}[z,w]$ is the \df{square free version} of $p$ if $q$ is the polynomial with smallest degree such that  $q$ divides $p$ and $Z(p)=Z(q)$. The square free version is unique up to multiplication by a nonzero constant.

Let \df{$\mathbb D$}, \df{$\mathbb T$} and \df{$\mathbb E$} denote the open unit disk, the boundary of the unit disk and complement of the closed unit disk in $\C$ respectively. In \cite{AKM} the notion of an inner toral polynomial is introduced. (See also \cite{AMS,AM06,Knesetrans,PS}.)  A polynomial $q\in \C[z,w]$ is \df{inner toral} if 
\[
 Z(q)\subset \mathbb D^2  \cup \mathbb T^2 \cup \mathbb E^2.
\]
 In other words, if $(z,w)\in Z(q)$ then either $|z|,|w|<1$ or $|z|=1=|w|$ or $|z|,|w|>1$.  A \df{distinguished variety} in $\C^2$  is the zero set of an inner toral polynomial. 
 
 Let $V$ be an isometry defined on a Hilbert space $\cH$. By the Wold Decomposition, there exist two reducing subspaces for $V$, say $\cK$ and $\cL$, such that $\cH= \cK \oplus \cL$ and $ S=V|_{\cK}$ is a shift operator and $U=V|_{\cL}$ is a unitary operator. We say $V$ is \df{pure}, if there is no unitary part. An isometry $V$ is  pure  if and only if   $\bigcap_{j=1}^{\infty} V^j(\cH) = \{ 0 \}$. A subspace $\cW$ of $\cH$ is called a wandering subspace for $V$ if $V^n(\cW) \perp V^m(\cW)$ for $n \neq m$ and $H=\bigoplus_{n=0}^{\infty} V^n(\cW)$. If $V$ is a pure isometry and $\cW=\cH \ominus V(\cH) =\ker(V^*)$, then $\ker(V^*)$ is a wandering subspace for $V$. Moreover, if $V$ is a pure isometry  then $V \cong M_z$ on the Hilbert Hardy space $H_{\cW}^2$ of $\cW$-valued functions for a Hilbert space $\cW$ with dimension of $\dim(\ker(V^*))$. The \df{multiplicity} of a pure isometry $V$ is defined as $\mult(V)=\dim(\ker(V^*))$.

 A \df{pure isopair} is a pair of commuting pure isometries. A pure isopair  $V=(S,T)$  is a \df{pure algebraic isopair}   if there is a nonzero polynomial $q \in \C[z,w]$ such that $q(S,T)=0$ and  is also referred to as \df{pure $q$-isopair}. The study of pure algebraic isopairs was initiated in \cite{AKM} and also discussed in \cite{M12}. Among the many results in  \cite{AKM} it is shown (see Theorem 1.20) if $V=(S,T)$ is a pure  algebraic isopair, then there is a square free inner toral polynomial $\fp$ such that $\fp(S,T)=0$ that is minimal in the sense if $q(S,T)=0$, then $\fp$ divides $q$. We call this polynomial $\fp$ the \df{minimal polynomial}  of $V$. The minimal polynomial of $V$ is unique up to multiplication by a nonzero constant. Moreover, in \cite{AKM} the notion of a \df{nearly cyclic} pure isopair is introduced. Here we fix a square free inner toral polynomial $\fp$ and consider nearly multi-cyclic pure  isopairs with the  minimal polynomial $\fp$.    
 
 An isopair $V=(S,T)$ acting on a Hilbert space $\cH$ is called  \df{at most nearly $ k$-cyclic} if there exist    distinct $f_1,...,f_{k} \in \cH$ such that the closure of 
 \begin{equation}\label{cyc} \{\sum_{j=1}^k q_j(S,T)f_j  : q_j  \in \mathbb{C}[z,w] \textrm{ for } j=1,2,...,k \}
 \end{equation}
is of finite codimension in $\cH$. It is called  \df{at least nearly $\ k$-cyclic} if the closure of 
$$
\{\sum_{j=1}^ l q_j(S,T)f_j  : q_j  \in \mathbb{C}[z,w] \textrm{ for } j=1,2,...,l \}
$$
is not of finite codimension in $\cH$ for any $l < k$ and for any set of $f_1,...,f_{l} \in \cH$. We say $V=(S,T)$ is \df{nearly $ k$-cyclic} if it is both at most nearly $k$-cyclic and at least nearly $ k$-cyclic. Moreover, $V=(S,T)$ is called \df{$k$-cyclic} if it is nearly $ k$-cyclic and  the span given in $\eqref{cyc}$ is dense in $\cH$.

Given a pair of isometries $V=(S,T)$, define the \df{bimultiplicity} of $V$ by
\[
\bimult(V)=(\mult(S),\mult(T)).
\]

It is a well known fact that we can view pure isopaires as pairs of multiplication operators. In particular, if \  $V=(S,T)$ is a pure $\fp$-isopair of finite multiplicity $(M,N)$, then there exists an  $M \times M$ matrix-valued  rational inner function $\Phi$ with its poles in $\mathbb{E}$, such that $V$ is unitarily equivalent to $(M_z, M_{\Phi})$ on $H^2_{\mathbb{C}^M}$ and $\fp(M_z, M_{\Phi})=0$ (see \cite{AKM}).   Moreover
\begin{equation}\label{it:plambdazero}
\fp(\lambda,\Phi(\lambda))=0 \textrm{ for } \lambda \in \overline{\mathbb{D}}.
\end{equation} 

 \begin{definition}
   \label{def:regular}
We say a point $(\lambda,\mu) \in \mathbb{C}^2$ is a \df{regular point for $\fp$}  if $(\lambda,\mu) \in Z(\fp)$ , but 
\[
\nabla \fp(\lambda,\mu) = \left(\frac{\partial \fp}{\partial z} , \frac{\partial \fp}{\partial w}\right)| _{(\lambda,\mu)} \neq 0.\]
\end{definition}

 Let $\fp$ be a square free inner toral polynomial. Write $\fp=\fp_1\,\fp_2\,...\fp_s$ as a product of (distinct) irreducible factors. Then each $\fp_j$ is inner toral. In other words, each $Z(\fp_j)$ is a distinguished variety. The zero set of $\fp$  is the union of the zero sets of $\fp_j$. Let \index{$\Vfp(\fp)$}
  \[
 \Vfpj =Z(\fp_j)\cap \mathbb D^2, \ \ \ \Vfp(\fp) = Z(\fp)\cap \mathbb D^2 = \displaystyle{\bigcup_{j=1}^s \Vfpj}.
\]
Let $\mathbb{N}$ denote the non negative integers and $\mathbb{N_+}$  denote the positive integers.

\begin{proposition} \label{jointkernelconstant}
	Let $V=(S,T)$ be a pure $\fp$-isopair of finite bimultiplicity  with minimal polynomial $\fp$ and  suppose $\fp=\fp_1\,\fp_2\, \cdots \fp_s$,  a product of distinct irreducible factors. For each $j$ and $(\lambda,\mu)\in \Vfpj$ that is a regular point for $\fp$, the dimension of the intersection of $\ker(S-\lambda)^*$ and $\ker(T-\mu)^* $ is a nonzero constant. 
\end{proposition}

\begin{definition}
Let $V=(S,T)$ be a pure $\fp$-isopair of finite bimultiplicity  with minimal polynomial $\fp$ and  suppose $\fp=\fp_1\,\fp_2\, \cdots \fp_s$,  a product of distinct irreducible factors. The rank of $V$ is a $s$-tuple,  $\alpha=(\alpha_1,\dots,\alpha_s) \in \mathbb{N}_+^s$, denoted by  $\rank(V)$, where 
\[ \alpha_j = \dim \left(\ker(S-\lambda)^*\cap \ker(T-\mu)^*\right),
\]
and $(\lambda,\mu)\in \Vfpj$ and a regular point for $\fp$.
\end{definition}

%We call $\alpha$ the \df{rank} of $V$, denoted $\rank(V)$. \index{$\rank(V)$}  

%\begin{theorem}
 %\label{thm:mainnearly}
   %Suppose $V=(S,T)$ is a pure $\fp$-isopair of finite bimultiplicity   with minimal polynomial $\fp$ and  write $\fp=\fp_1\,\fp_2\, \cdots \fp_s$ as a product of distinct irreducible factors. If $V$ has  rank $\alpha$ and $\beta =\max\{\alpha_1,...,\alpha_s\}$,  then
 %\begin{enumerate}[(i)]
  %\item \label{it:one} there exists a finite codimension invariant subspace $\cH$ for $V$ such that the restriction of $V$ to $\cH$ is $\beta$-cyclic;
  %\item \label{it:two} $V$ is not $k$-cyclic for any $k<\beta$; and
  %\item \label{it:three} there exists a $\beta$-cyclic pure $\fp$-isopair $V^\prime$ and an invariant subspace $\cK$ for $V^\prime$ such that $V$ is the restriction of $V^\prime$ to $\cK$.
%\end{enumerate}
%\end{theorem}

\begin{theorem} \label{thm:main}
 Suppose $V=(S,T)$ is a pure $\fp$-isopair of finite bimultiplicity   with minimal polynomial $\fp$ and  write $\fp=\fp_1\,\fp_2\, \cdots \fp_s$ as a product of distinct irreducible factors. If $V$ has  rank $(\alpha_1,\alpha_2,...,\alpha_s)$,  then $V$ is   nearly $\max\{\alpha_1,...,\alpha_s\}$-cyclic. 
\end{theorem}

\begin{remark}\rm
 Compare Theorem \ref{thm:main} with the results in \cite{AKM}. 
 
%Note that we don't know if all $\cH$ and $\cK$ can be chosen so that $V^\prime$ is unitarily equivalent to $V|_\cH$, the restriction of $V$ to $\cH$.
\end{remark}

We prove Theorem \ref{thm:main} in section \ref{sec:5}.  An important ingredient in the proof of Theorem \ref{thm:main} is a representation for a pure $\fp$-
isopair as a pair of multiplication operators on a reproducing kernel Hilbert space over $\Vfp(\fp)$ in the case $\fp$ is irreducible.    Representations of this type already appear in the literature, \cite[Theorem D.14]{DC}   for instance. Here we provide  additional information. See Theorems \ref{thm:KtoST} and \ref{S,T nearly alpha cyclic}.

\begin{remark}
The concept  of nearly multi-cyclic isopairs was introduced in \cite{AKM}.  A discussion on multicyclicity of a bundle shift  given in terms of   its multiplicities  can be found in \cite{AD76}.  In \cite{R69}, the article presents a way to realize a Riemann surface with a   distinguished variety. 
\end{remark}

\section{PRELIMINARIES}

\begin{proposition}
  \label{prop:finite-codim}
  Suppose $p,q\in \C[z,w]$.
\begin{enumerate}[(i)]
 \item
  \label{it:weak-bezout}
     $Z(p)\cap Z(q)$ is a finite set if and only if $p$ and $q$ are relatively prime.
 \item 
   \label{it:finite}
   If $p$ and $q$ are relatively prime, then the ideal $I\subset\C[z,w]$ generated by $p$ and $q$ has finite codimension in $\C[z,w]$; i.e there is a finite dimensional subspace $\mathcal R$  of $\C[z,w]$ such that for each $\psi \in \C[z,w]$ there exist polynomials $s,t \in \C[z,w]$ and $r \in \mathcal R $ such that 
\[
   \psi = sp+tq+ r.
 \]
\end{enumerate}
\end{proposition}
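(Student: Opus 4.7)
The plan is to establish (i) first and then extract from its proof the key ingredient needed for (ii). For (i), the ``only if'' direction is immediate: if $p$ and $q$ share a non-constant common factor $r$, then $Z(r)\subset Z(p)\cap Z(q)$, and $Z(r)$ is infinite, since an irreducible factor of $r$ has a zero over all but finitely many values of $z$. For the ``if'' direction, I would appeal to resultants. Setting aside the harmless case in which $p$ or $q$ depends on only one variable (treated directly by factoring $p(z)$ over $\C$ and noting that $q(a,w)\not\equiv 0$ for each root $a$), I may assume both polynomials have positive $w$-degree. By Gauss's lemma applied in $\C[z][w]$, polynomials relatively prime in $\C[z,w]$ remain relatively prime over $\C(z)$, so the $w$-resultant $R(z)=\mathrm{Res}_w(p,q)\in\C[z]$ is nonzero. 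Every $(a,b)\in Z(p)\cap Z(q)$ must satisfy $R(a)=0$, pinning the $z$-coordinate to finitely many values; the symmetric argument with $\mathrm{Res}_z(p,q)\in\C[w]$ does the same for $w$, whence $Z(p)\cap Z(q)$ is finite.

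For (ii), the proof of (i) yields more than is needed: by the Sylvester matrix identity, $R(z)$ is an explicit $\C[z,w]$-linear combination of $p$ and $q$ and thus belongs to the ideal $I=(p,q)$. So $I$ contains a nonzero polynomial $R(z)\in\C[z]$, and by the symmetric argument a nonzero $S(w)\in\C[w]$. Given any $\psi\in\C[z,w]$, I would apply Euclidean division in $w$ by $S(w)$, whose leading coefficient is a nonzero scalar, producing an equivalent element $\rho\in\C[z,w]$ with $\deg_w \rho<\deg S$. I would then reduce each coefficient of $\rho$ in $\C[z]$ modulo $R(z)$, obtaining $\psi\equiv r\pmod I$ with $r$ of bidegree at most $(\deg R-1,\deg S-1)$. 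Taking $\mathcal R$ to be the span of the monomials $z^iw^j$ with $0\le i<\deg R$ and $0\le j<\deg S$ supplies the required finite-dimensional complement, of dimension at most $(\deg R)(\deg S)$.

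The only real obstacle is bookkeeping: handling the degenerate case in which $p$ or $q$ lies in $\C[z]$ or $\C[w]$ (where no resultant is needed, since such a factor lives directly in $I$) and verifying that the resultant actually lands in the ideal, which is the classical Sylvester identity. Both issues are standard, and the argument requires no ingredient beyond the interplay between Gauss's lemma, resultants, and polynomial division.
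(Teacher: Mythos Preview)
Your argument is correct. The paper, however, does not prove this proposition in any detail: it simply invokes B\'ezout's Theorem for item~(i) and then cites the Finiteness Theorem from \cite{DC} (the fact that $\C[z,w]/I$ is finite-dimensional whenever $V(I)$ is finite) for item~(ii). So your route is genuinely different and considerably more elementary. Where the paper outsources both facts to algebraic geometry, you stay entirely within one-variable polynomial algebra: Gauss's lemma to pass to $\C(z)[w]$, the nonvanishing of the resultant, the Sylvester identity to place $R(z)$ and $S(w)$ inside $I$, and then ordinary Euclidean division to exhibit an explicit finite-dimensional $\mathcal R$. A small bonus of your approach is the concrete bound $\dim\mathcal R\le(\deg R)(\deg S)$, which the cited Finiteness Theorem does not provide directly. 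The only place to be a bit more careful in a write-up is the bookkeeping you flag yourself: when one of $p,q$ lies in $\C[z]$ or $\C[w]$ you should note that it already supplies the needed one-variable element of $I$, and that the remaining resultant (or a trivial $\gcd$ argument in $\C[z]$ when both lie there) furnishes the other.
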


 Bezout's Theorem says that if two algebraic curves, say described by $p=0$ and $q=0$,  do not have any common factors, then they have only finitely many points in common. In particular if $p$ and $q$ do not have any common factors, then $Z(p)$ and $Z(q)$ have only finitely many points in common.  In particular, for the ideal $I$ generated by $p$ and $q$, the affine variety $V(I)= Z(p) \cap Z(q) $ is finite. The  {\it Finiteness Theorem} of \cite[page 13]{DC}, says that if $V(I)$ is finite then the quotient ring $\C[z,w] / I$ has a finite dimension. Hence the ideal $I$ has finite codimension in $\C[z,w]$.

For $p\in \C[z,w]$ and  $\lambda \in\mathbb D$, let $p_\lambda(w)=p(\lambda,w)$   \index{$p^\mu$}.

\begin{lemma}
\label{lem:divide}
 Suppose $\fp$ is square free and inner toral and write $\fp=\fps$  as a product of   irreducible factors. Let $q$ be a nonzero polynomial.
\begin{enumerate}[(i)]
\item 
 \label{it:divide} If $q$ vanishes on a countably  infinite  subset of $\Vfpj$, then $\fp_j$ divides $q$.
\item  \label{it:qvanish} If $q$ vanishes on a cofinite subset of $\Vfp(\fp)$, then $\fp$ divides $q$.
\item \label{it:intersectD} If $Z(q)\cap Z(\fp)\cap \mathbb D^2$ is finite, then $q$ and $\fp$ are relatively prime.
\item \label{it:deriisfinite}The polynomial $\frac{\partial \fp}{\partial w}$ has only finitely many zeros in $\Vfp(\fp)$.
\item \label{it:partialq} If $q \frac{\partial \fp}{\partial w}$ is zero on a cofinite subset of $\Vfp(\fp)$, then $\fp$ divides $q$.
 \item \label{it:Lambda cofinite} If $\Lambda$ is the set of all $\lambda \in \mathbb{D}$ for which $\fp_\lambda (w)$  has distinct zeros, then $\Lambda \subset \mathbb{D}$ is cofinite. 
\end{enumerate}
\end{lemma}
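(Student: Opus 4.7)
The six parts cascade, with Proposition~\ref{prop:finite-codim} as the main tool. The natural order of proof is (i), (ii), (iii), (iv), (v), (vi), with (iv) being the crux.

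For \eqref{it:divide}, read ``countable'' as ``infinite'' (a finite hypothesis is too weak). If $q$ vanishes on infinitely many points of $\Vfpj \subset Z(\fp_j)$ then $Z(q)\cap Z(\fp_j)$ is infinite, so Proposition~\ref{prop:finite-codim}\eqref{it:weak-bezout} forces $q$ and $\fp_j$ to share a common factor; irreducibility of $\fp_j$ then gives $\fp_j\mid q$. Part \eqref{it:qvanish} reduces to (i): each $\Vfpj$ is infinite (a one-dimensional variety meeting the open bidisc), so a cofinite subset of $\Vfp(\fp) = \bigcup_j \Vfpj$ meets each $\Vfpj$ infinitely; applying (i) to each factor and using that the $\fp_j$'s are distinct irreducible factors gives $\fp = \fps$ divides $q$. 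Part \eqref{it:intersectD} is the contrapositive: a common factor of $q$ and $\fp$ must be a scalar multiple of some $\fp_j$, so $\Vfpj \subset Z(q)\cap Z(\fp)\cap \mathbb D^2$ is infinite.

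The main step is \eqref{it:deriisfinite}. The goal is to show $\fp$ and $\partial\fp/\partial w$ are relatively prime, after which Proposition~\ref{prop:finite-codim}\eqref{it:weak-bezout} makes $Z(\fp) \cap Z(\partial\fp/\partial w)$ finite, and its intersection with $\Vfp(\fp)$ a fortiori. Expanding
\[
 \frac{\partial \fp}{\partial w} \;=\; \sum_{j=1}^s \frac{\partial \fp_j}{\partial w}\prod_{k\neq j}\fp_k,
\]
if some irreducible factor $\fp_i$ divided the left-hand side then distinctness of the $\fp_j$'s and irreducibility of $\fp_i$ would force $\fp_i \mid \partial\fp_i/\partial w$, and the degree inequality $\deg_w(\partial\fp_i/\partial w) < \deg_w \fp_i$ would force $\partial\fp_i/\partial w \equiv 0$, i.e.\ $\fp_i\in\C[z]$, hence $\fp_i = c(z-z_0)$. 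But then $Z(\fp_i) = \{z_0\}\times\C$ cannot fit inside $\mathbb D^2 \cup \mathbb T^2 \cup \mathbb E^2$ for any $z_0$, contradicting inner torality. This is the one place where the inner-toral hypothesis is genuinely needed, and the only nonroutine obstacle in the proof.

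Parts \eqref{it:partialq} and \eqref{it:Lambda cofinite} are quick consequences. For (v), discard the finitely many zeros of $\partial\fp/\partial w$ in $\Vfp(\fp)$ (given by (iv)) from the cofinite set on which $q\,\partial\fp/\partial w$ vanishes; on the resulting cofinite subset of $\Vfp(\fp)$, $q$ itself vanishes, so (ii) gives $\fp\mid q$. For (vi), $\lambda\in\mathbb D$ lies outside $\Lambda$ iff $\fp_\lambda(w)$ has a repeated zero $\mu$, iff $(\lambda,\mu)\in Z(\fp)\cap Z(\partial\fp/\partial w)$; the coprimality established in (iv) makes this set finite in $\C^2$, so $\mathbb D \setminus \Lambda$ is contained in its projection to the first coordinate, which is finite.
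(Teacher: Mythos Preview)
Your proof is correct and follows essentially the same approach as the paper: Bezout plus irreducibility for (i)--(iii), the product-rule expansion of $\partial\fp/\partial w$ together with a degree-in-$w$ argument for (iv), and then (v)--(vi) as corollaries. The one notable difference is in (iv): the paper argues by contradiction through item (i) (infinitely many common zeros in $\Vfp(\fp_j)$ forces $\fp_j\mid \partial\fp/\partial w$), whereas you prove coprimality of $\fp$ and $\partial\fp/\partial w$ directly and, in doing so, explicitly dispose of the edge case $\partial\fp_i/\partial w\equiv 0$ via inner torality---a step the paper's ``lesser degree in $w$'' remark leaves implicit.
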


\begin{proof}
	The proof of item \eqref{it:divide} follows from  Proposition \ref{prop:finite-codim} item \eqref{it:weak-bezout} and by the fact that  $\fp_j$ is irreducible.  By item \eqref{it:divide}, each $\fp_j$ divides $q$. Since  the $\fp_j$'s are distinct, their product divides $q$, proving item \eqref{it:qvanish}. If $q$ and $\fp$ have a common factor, then because $\fp$ is inner toral,  $Z(q)$ and $Z(\fp)$ have infinitely many common points in $\mathbb{D}^2$,  proving \eqref{it:intersectD}.

	Let  $q=\frac{\partial \fp}{\partial w}$ and  suppose $q$ has infinitely many zeros in $\Vfp(\fp)$. In this case there is a $j$ such that $q$ has infinitely many zeros in $\Vfp(\fp_j)$.  Hence by \eqref{it:divide}, $q$ vanishes on $\Vfp(\fp_j)$.  %On the other hand,
	%\[
	%q = \sum_{i=1}^s \frac{\partial \fp_i}{\partial w} \prod_{\ell \ne i} \fp_\ell.
	%\]
	%For a point $(z,w)\in \Vfp(\fp_j)$  it follows that
	%\[\frac{\partial \fp_j}{\partial w} \prod_{\ell \ne j} \fp_\ell =0.\]
	Therefore, either $\frac{\partial \fp_j}{\partial w}$ has infinitely many zeros in $\Vfp(\fp_j)$ or there is an $\ell$ such that $\fp_\ell$ has infinitely many zeros in $\Vfp(\fp_j)$ and  thus,  by part \eqref{it:divide}, $\fp_j$ divides  $\frac{\partial \fp_j}{\partial w}$ or   $\fp_j$ divides $\fp_\ell$, a contradiction.  %However,  the polynomial  $\fp_j$ does not divide $\frac{\partial \fp_j}{\partial w}$  since the latter has lesser degree in $w$ than the former and  $\fp_j$ does not divide $\fp_\ell$ since they are distinct  irreducible polynomials.  Therefore, $\frac{\partial \fp}{\partial w}$ has only finitely many zeros in $\Vfp(\fp)$.  
	Item \eqref{it:partialq} follows from item \eqref{it:qvanish}. To prove  item (\ref{it:Lambda cofinite}), if $\Lambda$ is not cofinite, then %$\fp_\lambda(w)$ has repeated zeros for infinitely many $\lambda \in \mathbb{D}$.  Thus 
		$\frac{\partial \fp}{\partial w} $  has infinitely many zeros in $Z(\fp)$. Since $\fp$ is inner toral, $\frac{\partial \fp}{\partial w} $  has infinitely many zeros in $\Vfp(\fp)$,  a contradiction to  item \eqref{it:deriisfinite} and hence $\Lambda$ is  cofinite.
\end{proof}

\begin{proposition}
 \label{prop:squarefree}
 Suppose $p \in \C[z,w]$   is a square free polynomial and write $p=p_1\, p_2\, \cdots \, p_s$  as a product of irreducible factors $p_j\in \C[z,w]$.  If $q\in \C[z,w]$ and $Z(p) \subseteq Z(q)$,  then there exist $\gamma=(\gamma_1,...,\gamma_s) \in \N_+ ^s$ and an $r\in \C[z,w]$ such that  $p_j$ and $r$ are relatively prime and
\[
  q = p_1^{\gamma_1}\, p_2^{\gamma_2} \, \cdots \, p_s^{\gamma_s} \, r.
\]
\end{proposition}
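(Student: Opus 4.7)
The plan is to invoke unique factorization in the UFD $\C[z,w]$. Since each $p_j$ is irreducible and the factorization of any nonzero polynomial is finite, for a fixed $q$ there exists a largest nonnegative integer $\gamma_j$ with $p_j^{\gamma_j} \mid q$. (The statement of the proposition uses $\gamma \in \N^s$; I read this as allowing $\gamma_j = 0$, so that the factor $p_j^{\gamma_j}$ is simply dropped when $p_j \nmid q$.) Moreover, because $p_1,\dots,p_s$ are pairwise distinct irreducibles, any two of them are relatively prime, and hence so are any two of the powers $p_i^{\gamma_i}$ and $p_j^{\gamma_j}$ with $i \ne j$.

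Next I would peel off one factor at a time. Write $q = p_1^{\gamma_1}\, q^{(1)}$, which is possible by choice of $\gamma_1$, and note that $p_1 \nmid q^{(1)}$ by the maximality of $\gamma_1$. Since $p_1$ and $p_2$ are coprime, the exponent of $p_2$ in the unique factorization of $q^{(1)}$ equals the exponent of $p_2$ in $q$, namely $\gamma_2$; so I can write $q^{(1)} = p_2^{\gamma_2}\, q^{(2)}$ with $p_1, p_2$ both failing to divide $q^{(2)}$. Iterating this, after $s$ steps I obtain
\[
 q = p_1^{\gamma_1}\, p_2^{\gamma_2}\, \cdots\, p_s^{\gamma_s}\, r,
\]
where $r := q^{(s)}$ has the property that $p_j \nmid r$ for every $j$.

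It remains to check the coprimality claim. Since each $p_j$ is irreducible, $p_j$ and $r$ share a non-unit common factor if and only if $p_j \mid r$; by construction the latter fails, so $p_j$ and $r$ are relatively prime, completing the argument.

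There is no substantial obstacle here; the content is just unique factorization in $\C[z,w]$ together with the elementary observation that distinct irreducible factors can be extracted independently because their powers are pairwise coprime. The only point requiring care is the bookkeeping with the exponents $\gamma_j$ and the convention on whether zero exponents are permitted.
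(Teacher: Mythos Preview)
Your argument is correct and follows essentially the same approach as the paper: choose each $\gamma_j$ maximal with $p_j^{\gamma_j}\mid q$, divide out, and observe that the quotient $r$ cannot be divisible by any $p_j$, hence is coprime to each. The paper does this in one step rather than iteratively, but the content is identical, and your remark about the convention on $\gamma_j=0$ is a fair observation.
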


\begin{proof}
 The proof is an application of Bezout's Theorem. 
\end{proof}

\begin{remark}
	If $p$ and $q$ are  inner toral polynomial, then we may replace the condition $Z(p)\subseteq Z(q)$ with $\Vfp(\fp) \subseteq Z(q)$. 
\end{remark}

\section{RESULTS FOR GENERAL $\textbf{\em{p}}$}

 In this section $\fp=\fps$ is a general square free inner toral polynomial with (distinct) irreducible factors $\fp_j$.  Let $(n_j,m_j)$ be the bidegree of $\fp_j(z,w)$.
 
In \cite{AKM} it is proven that any nearly cyclic pure $\fp$-isopair is unitarily equivalent to a cyclic pure $\fp$-isopair  restricted to a finite codimensional invariant subspace (See Proposition 3.6 in \cite{AKM}). Next Proposition is a  more generalized version of this result. 

\begin{proposition}
 \label{prop:restrictcyclic}
 Suppose  $V=(S,T)$ is a pure $\fp$-isopair of finite bimultiplicity $(M,N)$  acting on the Hilbert space $\cK$. If $\cH$ is a finite codimension $V$-invariant subspace  of $\cK$ and $W$ is the restriction of $V$ to $\cH$, then there exists a finite codimension subspace $\cL$ of $\cH$ such that $V$ is unitarily equivalent to the restriction of $W$ to $\cL$.  
\end{proposition}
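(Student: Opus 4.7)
The plan is to pass to the multiplication-operator model from Theorem~\ref{steq} and find a scalar finite Blaschke product $B$ so that $\cL := B\,H^2_{\mathbb C^M}$ sits inside $\cH$ and the multiplication operator $M_B$ implements the desired unitary equivalence. First I would use Theorem~\ref{steq} to identify $\cK$ with $H^2_{\mathbb C^M}$, $S$ with $M_z$, and $T$ with $M_\Phi$ for the associated $M\times M$ rational inner function $\Phi$. Set $\mathcal E := \cK\ominus\cH$, which is finite dimensional; since $\cH$ is $V$-invariant, $\mathcal E$ is $S^*$-invariant, so $S^*|_{\mathcal E}$ is a matrix on $\mathcal E$.

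The first substantive step is to verify that every eigenvalue of $S^*|_{\mathcal E}$ lies in the open disk $\mathbb D$. If $S^*v=\mu v$ with $|\mu|=1$, then $SS^*v=\mu Sv$; since $SS^*$ is the projection onto the range of $S$, a short computation gives $Sv=\overline{\mu}\,v$ and hence $\|S^n v\|=\|v\|$ for every $n$, which forces $v\in\bigcap_{n\geq 1}S^n\cK=\{0\}$ by purity of $S$. Consequently the minimal polynomial of $S^*|_{\mathcal E}$ has all its roots $\overline{\lambda_1},\dots,\overline{\lambda_k}$ in $\mathbb D$, with multiplicities $m_1,\dots,m_k$. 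The polynomial $h(z):=\prod_{i=1}^k(z-\lambda_i)^{m_i}$, whose roots are the conjugates, is then the minimal polynomial of the compression $\widetilde S:=P_{\mathcal E}S|_{\mathcal E}$; using the decomposition $\cK=\cH\oplus\mathcal E$ and the $S$-invariance of $\cH$, an easy induction gives $P_{\mathcal E}S^j|_{\mathcal E}=\widetilde S^{\,j}$ for all $j$, hence $h(S)\cK\subseteq\cH$.

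Next I would form the scalar finite Blaschke product
\[
 B(z):=\prod_{i=1}^k\Bigl(\frac{z-\lambda_i}{1-\overline{\lambda_i}\,z}\Bigr)^{m_i}
\]
and note that $B/h=\prod_i(1-\overline{\lambda_i}\,z)^{-m_i}\in H^\infty$, so $B\,H^2_{\mathbb C^M}\subseteq h\,H^2_{\mathbb C^M}\subseteq\cH$. Set $\cL:=B\,H^2_{\mathbb C^M}$. Because $B$ is a \emph{scalar} inner function, $M_B$ commutes with both $M_z$ and $M_\Phi$; hence $\cL$ is $V$-invariant, $M_B$ is an isometry from $\cK$ onto $\cL$ with cokernel of dimension $M\deg B$, and $M_B$ intertwines $V$ with $V|_{\cL}$. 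In particular $\cL$ has finite codimension in $\cH$, and since $\cL\subseteq\cH$ we have $W|_{\cL}=V|_{\cL}\cong V$, as required.

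The main obstacle is the eigenvalue step: without purity of $S$, roots of the minimal polynomial could land on $\mathbb T$ and one would not obtain a genuine Blaschke product with which to realize the unitary equivalence. Everything following that step is essentially bookkeeping once the scalar Blaschke product is in hand.
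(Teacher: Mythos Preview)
Your argument is correct and is essentially the paper's own proof: both construct a finite Blaschke product that annihilates the compression of $S$ to $\cK\ominus\cH$ and take $\cL$ to be the range of the resulting isometry, which lands in $\cH$ and intertwines $V$ with $W|_{\cL}$. The only cosmetic differences are that the paper works abstractly with $u(S)$ rather than passing to the $(M_z,M_\Phi)$ model, and it applies the Blaschke product directly to the compression $A$ (so that $u(A)=0$ gives $u(S)\cK\subseteq\cH$ immediately from the block-triangular form) instead of your detour through the polynomial $h$ and the inclusion $B\,H^2\subseteq h\,H^2$.
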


\begin{remark}\rm
 \label{rem:codimsadd}
   In  case the codimension of $\cH$ is one, the codimension of $\cL$ (in $\cH$) can be chosen as $N-1$ (or as $M-1$).  In general, the proof yields a relation between the codimensions of $\cH$ in $\cK$ and $\cL$ in $\cH$ (or in $\cK$).
\end{remark}

\begin{Cor} \label{cor:nearlyuniequi}
 Suppose  $V=(S,T)$ is a pure $\fp$-isopair of finite bimultiplicity $(M,N)$ acting on the Hilbert space $\cK$. If there exists a finite codimension $V$-invariant subspace $\cH$ of $\cK$ such that the restriction of $V$ to $\cH$ is $\beta$-cyclic, then there exists a $\beta$-cyclic pure isopair $W$ acting a Hilbert space $\cL$ and a finite codimension $W$-invariant subspace $\cF$ of $\cL$  such that $W|_\cF$ is unitarily equivalent to $V$.
 \end{Cor}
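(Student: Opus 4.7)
The plan is to obtain this corollary as an essentially immediate consequence of Proposition \ref{prop:restrictcyclic}. The hypothesis supplies a finite codimension $V$-invariant subspace $\cH \subset \cK$ on which the restriction of $V$ is $\beta$-cyclic. The idea is to take the space on which $W$ acts to be $\cH$ itself and the isopair $W$ to be $V|_\cH$; Proposition \ref{prop:restrictcyclic} then produces, inside $\cH$, a finite codimension subspace on which $W$ restricts unitarily to $V$.

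First I would verify that $W := V|_\cH$ is a pure $\fp$-isopair. Purity is preserved under restriction to an invariant subspace, since $\bigcap_n W^n \cH = \bigcap_n V^n(\cH) \subset \bigcap_n V^n(\cK) = \{0\}$, and the analogous statement holds for the second coordinate of the pair. The polynomial relation $\fp(W) = \fp(V)|_\cH = 0$ is also immediate. Thus $W$ is a pure $\fp$-isopair on $\cL := \cH$, and by hypothesis it is $\beta$-cyclic.

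Next I would apply Proposition \ref{prop:restrictcyclic} with the very same data $V$, $\cK$, $\cH$ used in the statement of the corollary. The proposition yields a finite codimension subspace $\cF$ of $\cH = \cL$ such that $V$ is unitarily equivalent to the restriction of $W$ to $\cF$. Because $W|_\cF$ makes sense as an operator, $\cF$ is necessarily $W$-invariant, so $\cF$ is a finite codimension $W$-invariant subspace of $\cL$ with $W|_\cF \cong V$, giving exactly what is required.

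In effect, this corollary is a repackaging of Proposition \ref{prop:restrictcyclic} emphasizing the perspective that, once a $\beta$-cyclic restriction has been produced on some $\cH$, one may regard $V$ itself as living inside the $\beta$-cyclic pair $V|_\cH$ with finite codimension. There is no real obstacle: the only point requiring any comment is the routine verification that restriction preserves purity and the annihilating polynomial, and that the subspace produced by Proposition \ref{prop:restrictcyclic} is automatically invariant for the restricted pair.
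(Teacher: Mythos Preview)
Your proposal is correct and matches the paper's intended argument: the paper states this corollary immediately after Proposition~\ref{prop:restrictcyclic} without giving a separate proof, indicating it is meant to follow directly from that proposition by taking $W=V|_\cH$ on $\cL=\cH$ and invoking the proposition to obtain the finite codimension $W$-invariant subspace $\cF\subset\cH$ with $W|_\cF\cong V$. Your added verification that $W$ is a pure $\fp$-isopair is routine but appropriate, since the corollary asserts this explicitly.
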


\begin{proof}[Proof of Theorem \ref{prop:restrictcyclic}]
 Following the argument in \cite[Proposition 3.6]{AKM}, let $\cF = \cK\ominus \cH$ and write, with respect to the decomposition $\cK=\cH\oplus \cF$,
\begin{equation}
 \label{eq:subrep}
 V=(S,T) = \begin{pmatrix} W=(S,T)|_\cH & (X,Y)\\ 0 & (A,B) \end{pmatrix}.
\end{equation}
 In particular $A$ (and likewise $B$) is a contraction on a finite dimensional Hilbert space.  Because $V$ is pure and $A$ is a contraction, $A$ has spectrum in the open disc $\mathbb D$. Choose a (finite) Blaschke $u$ such that $u(A)=0$.  Note that $u(S)$ is an isometry on $\cK$ and moreover the codimension of the range of $u(S)$ (equal to the dimension of the kernel of $u(S)^*$) in $\cK$ is (at most) $dM$, where $d$ is the degree (number of zeros) of $u$. Further, since
\[
 u(S)= \begin{pmatrix} u(S|_\cH) & X^\prime \\ 0 & u(A)=0\end{pmatrix},
\]
 the range $\cL=u(S)\cK$ of $u(S)$ is a subspace of $\cH$ of finite codimension.   Since $u(S)V=Wu(S)$ it follows that $\cL$ is invariant for $W$ and $V$ is unitarily equivalent to $W$ restricted to $\cL$. 

To prove the remark, note that if $A$ is a scalar (equivalently $\cH$ has codimension one in $\cK$), then $u$ can be chosen to be a single Blaschke factor. In which case the codimension of $\cL$ is $N$ in $\cK$ and hence $N-1$ in $\cH$. In general, if $d$ is the degree of the Blaschke $u$, then the codimension of $\cL$ in $\cK$ is $dN$. By reversing the roles of $S$ and $T$ one can replace $N$ with $M$, the multiplicity of the shift $T$.
\end{proof}

\begin{proposition}\label{alphaexist}
		Let $(M_z, M_{\Phi})$ be a pure isopair of finite bimultiplicity $(M,N)$ with minimal polynomial $\fp$, where $\Phi(z)$ is an $M \times M$ matrix-valued rational inner function. There exists an $\alpha  = (\alpha_1,...,\alpha_s) \in \N_+ ^s$  such that 
	\begin{enumerate}[(i)] 
		\item  \label{characteristic poly} for $\lambda\in \mathbb D$, the characteristic polynomial $f_\lambda (w)$ of $\Phi(\lambda)$
		satisfies
		\begin{equation} 
		f_\lambda(w)=\det(w-\Phi(\lambda))= c(\lambda) \fp_{1,\lambda}^{\alpha_1} (w)\, \cdots \fp_{s,\lambda}^{\alpha_s}(w), 
		\end{equation} for a  constant (in $w$)  $c(\lambda)$.
		\item \label{it:diagonalizable} for each $\lambda$ such that $\fp_{\lambda}$ has $m$ distinct zeros, $\Phi(\lambda)$ is diagonalizable and similar to \[ \bigoplus_{j=1}^s \bigoplus_{\mu_j \in Z(\fp_{j,\lambda})} \mu_j I_{\alpha_j},
		\]
		\item  \label{it: dimkernel} if  $(\lambda,\mu) \in Z(\fp_j)$ and  $\frac{\partial \fp}{\partial w}|_{(\lambda,\mu)} \neq 0$, then 
		\[\dim\ker(\Phi(\lambda)-\mu)= \alpha_j.\]
	\end{enumerate} 
\end{proposition}

\begin{proof}
	First note that, by  equation \eqref{it:plambdazero}, for all $\lambda \in \overline{\mathbb D}$ \begin{equation} 
	\label{eq:pmzphi-alttwo}
	\fp_\lambda(\Phi(\lambda))=\fp(\lambda,\Phi(\lambda))=0.
	\end{equation}
	 In particular, the spectrum, $\sigma(\Phi(\lambda))$, is a subset of $Z(\fp_\lambda)$. 
	 
	 Note that $\det(w I_m -\Phi(z))$ is a rational  function whose denominator
	$d(z)$ (a polynomial in $z$ alone)  doesn't vanish in $\overline{\mathbb D}$. Let  $q(z,w)=d(z)\det(wI_m-\Phi(z))$, the numerator of $\det(w I_m -\Phi(z))$. For fixed $z\in \mathbb{D}$, let 
	\[
	q_z (w) =  d(z)\det(wI_m-\Phi(z)) = \sum_{j=0}^{M} q_j(z)w^j.
	\]
	By Cayley-Hamilton Theorem, $q_z (\Phi(z))=\sum_{j=0}^{M} q_j(z)\Phi(z)^j=0$ and therefore $q(z,\Phi(z))=0$ for all $z \in \mathbb{D}$. Now for $\gamma \in \mathbb{C}^M$ and $\lambda \in \mathbb{D}$, 
	$$q(M_z,M_\Phi(z))^*\gamma s_\lambda = \overline{q(\lambda,\Phi(\lambda))}\gamma s_\lambda =0.$$
	Therefore, $q(M_z,M_{\Phi})=0$. Since $\fp$ is the minimal polynomial for $(M_z, M_{\Phi})$, $\Vfp(\fp)$  is a subset of $Z(q)$.  Hence there exist an $\alpha  = (\alpha_1,...,\alpha_s) \in \N_+ ^s$ and a polynomial $r$ such that $\fp_j$ does not divide $r$ for each $j$ and 
	\begin{equation}
	\label{eq:4polys-alt}
	d(z)\det(w-\Phi(z))=q(z,w) = \fp_1^{\alpha_1} (z,w)\, \cdots \fp_s^{\alpha_s}(z,w) \, r(z,w).
	\end{equation}
	For $(\lambda,\mu)\in \overline{\mathbb D}\times\overline{\mathbb D}$,  $\mu$ is in the spectrum of $\Phi(\lambda)$ if and only if $q(\lambda,\mu)=0$. In particular, $q(z,w)$ is a polynomial whose zero set in $\mathbb D\times\mathbb C$ is the set $\{(z,w): z\in\mathbb D, \, w\in \sigma(\Phi(z))\} \subseteq \Vfp(\fp)$. Observe  $Z(r)\cap [\mathbb D\times\C] \subseteq Z(q) \cap [\mathbb D\times\C]\subseteq \Vfp(\fp)$. On the other hand, $r$ can have only finitely many zeros in $\Vfp(\fp)$ as otherwise $r$ has infinitely many zeros on some $\Vfp(\fp_j)$ and, by Lemma \ref{lem:divide} item (\ref{it:divide}) $\fp_j$ divides $r$. Hence
	$r(z,w)$ has only finitely many zeros in $\mathbb H=\mathbb D\times \C$.  We conclude there are only finitely many $z\in \mathbb D$ such that $r_z(w)=r(z,w)$ has a zero and consequently $r$ depends on $z$ only so that $r(z,w)=r(z)$.  Thus, for $\lambda\in \mathbb D$, the characteristic polynomial $f_\lambda (w)$ of $\Phi(\lambda)$
	satisfies
	\begin{equation}\label{eq:characteristic poly}
	f_\lambda(w)=\det(w-\Phi(\lambda))= c(\lambda) \fp_{1,\lambda}^{\alpha_1} (w)\, \cdots \fp_{s,\lambda}^{\alpha_s}(w),
	\end{equation}
	for a constant (in $w$) $c(\lambda)$. 
	
	%The degree in $w$ on the left hand side of \eqref{eq:characteristic poly} is $M$ and on the right hand side of \eqref{eq:characteristic poly} is, for all but finitely many $\lambda$, equal to $\sum_{j=1}^s \alpha_j m_j.$  Thus $M=\sum_{j=1}^s \alpha_j m_j$. By reversing the roles of $S$ and $T$,  $N=\sum_{j=1}^s \alpha_j n_j$. 
	
	Let  $\Lambda $ be the  set of all  $\lambda \in \mathbb{D}$ for which $\fp_\lambda$ has $\sum_{j=1}^s m_j$ distinct zeros. By Lemma \ref{lem:divide} item (\ref{it:Lambda cofinite}),  $\Lambda \subseteq \mathbb{D}$ is cofinite. For $\lambda\in \Lambda$, the polynomial $\fp_\lambda$ has distinct zeros and by \eqref{eq:pmzphi-alttwo},  $\fp_{\lambda}(\Phi(\lambda))=0$. Hence, $\Phi(\lambda)$ is diagonalizable and,  for given   $\mu_j \in Z(\fp_{j,\lambda})$, the dimension of the  eigenspace of  $\Phi(\lambda)$ at $\mu_j$ is $\alpha_j$. Thus $\Phi(\lambda)$ is similar to 
	\[
	\bigoplus_{j=1}^s \bigoplus_{\mu_j\in Z(\fp_{j,\lambda})} \mu_j I_{\alpha_j}.
	\]
	
		Let $(\lambda,\mu) \in Z(\fp_j)$ be such  that $\frac{\partial \fp}{\partial w}|_{(\lambda,\mu)} \neq 0$. The minimal polynomial for $\Phi(\lambda)$ has a zero of multiplicity 1 at $\mu$, since it  divides $\fp_\lambda$. Hence $\Phi(\lambda)$ is similar to $\mu I_{\alpha_j} \oplus J$ where  the spectrum of $J$ does not contain $\mu$. Therefore, the kernel of $\Phi(\lambda)- \mu$  has dimension $\alpha_j$. 
\end{proof}

\begin{proposition} \label{jointkernelconstant1}
	Let $V=(S,T)$ be a pure $\fp$-isopair of finite bimultiplicity   and  suppose $\fp=\fp_1\,\fp_2\, \cdots \fp_s$  a product of distinct irreducible factors. For each $j$ and $(\lambda,\mu)\in \Vfpj$ such that $\frac{\partial \fp}{\partial w}|_{(\lambda,\mu)} \neq 0$, the dimension of the intersection of $\ker(S-\lambda)^*$ and $\ker(T-\mu)^* $ is a nonzero constant. 
\end{proposition}

\begin{proof}
 By the standard model theory for pure isopairs with finite bimultiplicity,  there exists an  $M \times M$ matrix-valued  rational inner function $\Phi$ such that $V=(S,T)$ is unitarily equivalent to $(M_z, M_{\Phi})$ on $\cH^2_{\mathbb{C}^M}$ and $\fp(M_z,M_\Phi)=0$.   Let $(\lambda,\mu)\in \Vfpj$ be a regular point for $\fp$.  Observe that for any $\gamma \in \ker(\Phi(\lambda)-\mu)^*$, both  $(S-\lambda)^*s_\lambda \gamma =0$ and $(T-\mu)^*s_\lambda \gamma =0$. Hence $s_\lambda \gamma \in \ker(S-\lambda)^*\cap \ker(T-\mu)^*$.  Now suppose  $f \in \ker(S-\lambda)^* \cap \ker(T-\mu)^*$.   Since $(S-\lambda)^*f=0$, there is a vector $\gamma\in\C^N$ such that $f=s_\lambda \gamma$. Thus,  $0=(T-\mu)^* s_\lambda \gamma = s_\lambda (\Phi(\lambda)^*-\mu^*)\gamma$.  Hence
  \[s_\lambda  \ker(\Phi(\lambda)-\mu)^* =  \ker(S-\lambda)^* \cap \ker(T-\mu)^*.\]
Since $\dim\ker(\Phi(\lambda)-\mu)^* = \dim\ker(\Phi(\lambda)-\mu)$, we have 
\begin{equation} \label{eq: dimkereq}
\dim\left[\ker(S-\lambda)^* \cap \ker(T-\mu)^*\right] = \dim\ker(\Phi(\lambda)-\mu),
\end{equation}
and hence by Proposition \ref{alphaexist} item  (\ref{it: dimkernel}),   $\dim\left[\ker(S-\lambda)^* \cap \ker(T-\mu)^*\right] =  \alpha_j.$
\end{proof}

\begin{cor}\label{cor:jointkernelconstant}
	Let $V=(S,T)$ be a pure $\fp$-isopair of finite bimultiplicity   and  suppose $\fp=\fp_1\,\fp_2\, \cdots \fp_s$  a product of distinct irreducible factors. For each $j$ and $(\lambda,\mu)\in \Vfpj$ such that $\frac{\partial \fp}{\partial z}|_{(\lambda,\mu)} \neq 0$,  dimension of the intersection of $\ker(S-\lambda)^*$ and $\ker(T-\mu)^* $ is a nonzero constant. 
\end{cor}

\begin{proof}
The proof is immediate from the symmetry of $S$ and $T$ and Proposition \ref{jointkernelconstant1}.
\end{proof}

\begin{proof}[Proof of Proposition \ref{jointkernelconstant}]
Let $(\lambda,\mu) \in \Vfpj$. If $\frac{\partial \fp}{\partial w}|_{(\lambda,\mu)} \neq 0$, then by Proposition \ref{jointkernelconstant1}, there exists a non zero constant  $\alpha_j \in \mathbb{N}^+$ such that 
\[ 
\dim(\ker(S-\lambda)^* \cap \ker(T-\mu)^*) = \alpha_j.
\]
If $\frac{\partial \fp}{\partial z}|_{(\lambda,\mu)} \neq 0$, then by Corollary \ref{cor:jointkernelconstant}, there exists a non zero constant $\beta_j \in \mathbb{N}^+$ such that 
\[ 
\dim(\ker(S-\lambda)^* \cap \ker(T-\mu)^*) = \beta_j.
\]
Note that, since  $\fp$ is square free, so is $\fp_j$ and hence there are infinitely many points  in  $ \Vfpj$ such that  both partial derivatives $\frac{\partial \fp}{\partial z} |_{(z_0,w_0)}$ and $\frac{\partial \fp}{\partial w} |_{(z_0,w_0)}$ do not vanish.   If $(\lambda,\mu)$ is a regular point for $\fp$ such that $\frac{\partial \fp}{\partial z} |_{(\lambda,\mu)} \neq 0$ and $\frac{\partial \fp}{\partial w} |_{(\lambda,\mu)} \neq 0$, then $\alpha_j=\beta_j$.  Therefore, if $(\lambda,\mu) \in \Vfpj$ is a regular point for $\fp$, then dimension of the intersection of $\ker(S-\lambda)^*$ and $\ker(T-\mu)^* $ is a nonzero constant. 
\end{proof}

\begin{Cor} \label{prop:defrank}
	If  $(S,T)$ is a pure $\fp$-isopair of finite bimultiplicity $(M,N)$ with rank $\alpha = (\alpha_1,...,\alpha_s)\in \N _+^s$, then 
	\begin{equation}	\label{it:rank}
	\displaystyle{ M =\sum_{j=1}^s m_j \alpha_j}    \textrm{ and } \displaystyle{ N=\sum_{j=1}^s n_j\alpha_j}  .
	\end{equation}
\end{Cor}

\begin{proof}
    First, view $(S,T)$ as $(M_z,M_{\Phi})$ where $\Phi(z)$ is an $M \times M$ matrix-valued rational inner function. By Proposition \ref{alphaexist}  item (\ref{characteristic poly}), for $\lambda\in \mathbb D$,  
    \[
    \det(w-\Phi(\lambda))= c(\lambda) \fp_{1,\lambda}^{\alpha_1} (w)\, \cdots \fp_{s,\lambda}^{\alpha_s}(w)
    \]
    for a  constant (in $w$)  $c(\lambda)$.
    Comparing the degree in $w$ on the left and the right, for all but finitely many $\lambda$, we have \[M=\sum_{j=1}^s \alpha_j m_j.\] 
	
	To see the  relation on $N$, view $\fp$ as $\fp(w,z)$ a polynomial of bidegree $(m,n)$. Note that each factor $\fp_j = \fp_j (w,z)$ has bidegree $(m_j,n_j)$. Moreover      $\fp(T,S) =0$ and $(T,S)$ has bimultiplicity $(N,M)$.  Model  $(T,S)$ as $(M_w, M_{\Psi(w)})$, where $\Psi(w)$	is an $N \times N$ matrix valued ration inner function. By Proposition \ref{alphaexist}, item (\ref{characteristic poly}), there exists  $(\beta_1,\beta_2,...,\beta_s) \in \N_+ ^s$ such that for   $\mu \in \mathbb{D}$, 
    \begin{equation} \label{eq:reversecharacteristicpoly}
    \det(z-\Psi(\mu))= c'(\mu) \fp_{1,\mu}^{\beta_1} (z)\, \cdots \fp_{s,\mu}^{\beta_s}(z)
    \end{equation}
	for a  constant (in $z$)  $c'(\mu)$. By Proposition \ref{alphaexist} item (\ref{it: dimkernel}), for $(\mu, \lambda) \in Z(\fp_j)$ that is a regular point for $\fp$, 
	\[\dim\ker(\Psi(\mu) -\lambda)= \beta_j.\] 
	Now by equation \eqref{eq: dimkereq}, 
	\[ \dim\left[\ker(S-\lambda)^* \cap \ker(T-\mu)^*\right] =  \beta_j.
	\]
	Since $(S,T)$ has rank $\alpha$, we get $\beta_j = \alpha_j$ for $j=1,...,s$ and by comparing the degree in $z$ on the left and the right of \eqref{eq:reversecharacteristicpoly}, for all but finitely many $\mu$, we have \[N=\sum_{j=1}^s \alpha_j n_j. \qedhere \]  
	\end{proof}

\begin{proposition}
 \label{prop:kgealpha+}
  If $V=(S,T)$ is a finite bimultiplicity  $k$-cyclic pure $\fp$-isopair acting on the Hilbert space $\cK$,  then for each $(\lambda,\mu) \in \Vfp(\fp)$, 
\[ 
  \dim\left(\ker(S-\lambda)^* \cap \ker(T-\mu)^*\right) \le k.
\]
 In particular,  if $\fp$ is the minimal polynomial for $V$ and if $V$ has rank $\alpha$, then $k\ge \max\{\alpha_1,...,\alpha_s\}$. 
\end{proposition}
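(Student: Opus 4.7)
My plan is a straightforward joint eigenvector / duality argument. Let $f_1,\dots,f_k \in \cK$ be vectors witnessing that $V$ is $k$-cyclic, so that $\{\sum_{j=1}^k q_j(S,T)f_j : q_j\in \C[z,w]\}$ is dense in $\cK$. Fix $(\lambda,\mu)\in \Vfp(\fp)$ and set $E=\ker(S-\lambda)^* \cap \ker(T-\mu)^*$. For $g\in E$, we have $S^*g=\overline{\lambda}g$ and $T^*g=\overline{\mu}g$, and since $S^*$ and $T^*$ commute, expanding any polynomial $q(z,w)=\sum a_{ij}z^i w^j$ yields
\[
q(S,T)^* g \;=\; \sum \overline{a_{ij}}\,(S^*)^i (T^*)^j g \;=\; \overline{q(\lambda,\mu)}\,g.
\]

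Define the linear map $\Psi:E\to \C^k$ by $\Psi(g)=(\langle f_1,g\rangle,\dots,\langle f_k,g\rangle)$. If $\Psi(g)=0$, then for any polynomials $q_1,\dots,q_k$,
\[
\Big\langle \sum_{j=1}^k q_j(S,T) f_j,\, g\Big\rangle
= \sum_{j=1}^k \langle f_j,\, q_j(S,T)^* g\rangle
= \sum_{j=1}^k q_j(\lambda,\mu)\langle f_j,g\rangle =0.
\]
Since the set of such vectors is dense in $\cK$ by $k$-cyclicity, $g=0$. Thus $\Psi$ is injective and $\dim E \le k$, proving the first assertion.

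For the ``in particular'' statement, fix $j\in\{1,\dots,s\}$. By Lemma \ref{lem:divide}\eqref{it:deriisfinite} the partial $\frac{\partial \fp}{\partial w}$ has only finitely many zeros in $\Vfp(\fp)$, so the set of non-regular points of $\fp$ in the infinite set $\Vfpj$ is finite, and we may choose a regular point $(\lambda,\mu)\in \Vfpj$. Proposition \ref{prop:defrank}\eqref{it:rankVmainalt} (applicable because $V$ has finite bimultiplicity and hence admits the rank $\alpha$ of Theorem \ref{thm:mainrank}) gives $\dim E = \alpha_j$, so the first part forces $k\ge \alpha_j$. Taking the maximum over $j$ yields $k\ge \max\{\alpha_1,\dots,\alpha_s\}$.

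The argument is essentially a linear-algebra observation once one recognizes that points of $\Vfp(\fp)$ produce joint eigenvectors for $(S^*,T^*)$; the only non-trivial inputs are (a) that $k$-cyclicity gives actual density (so the pairing-based injectivity argument works as stated) and (b) the existence of a regular point in each $\Vfpj$, both of which are readily available from earlier results. I do not anticipate a significant obstacle.
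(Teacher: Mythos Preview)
Your proof is correct and follows essentially the same approach as the paper: both arguments compute $q(S,T)^*g=\overline{q(\lambda,\mu)}\,g$ for $g\in E$ and then use $k$-cyclicity to bound $\dim E$, with the only cosmetic difference being that you package the conclusion via an injective linear map $\Psi:E\to\C^k$ while the paper phrases it as a contradiction. Your treatment of the ``in particular'' clause (invoking Lemma~\ref{lem:divide}\eqref{it:deriisfinite} to produce a regular point in each $\Vfpj$ and then Proposition~\ref{prop:defrank}\eqref{it:rankVmainalt}) is slightly more explicit than the paper's one-line citation but amounts to the same reasoning.
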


\begin{proof}
	 Let $\{f_1,...,f_k\}$ be a cyclic set for $(S,T)$. For any $q(z,w)\in \mathbb{C}[z,w]$, $f \in \ker (S-\lambda)^* \cap \ker (T-\mu)^*$ and  $1 \leq j \leq k$,
\[ 
\begin{split}
 \langle q(S,T) f_j , f \rangle 
 = & \langle  f_j , q(S,T)^* f \rangle\\
 = & \langle  f_j , q(\lambda,\mu)^* f \rangle \\
 = & q(\lambda,\mu) \langle  f_j , f \rangle.
\end{split}
\]
If $ \dim\left(\ker (S-\lambda)^* \cap \ker (T-\mu)^*\right) > k$, then there exists a non zero vector $f \in \ker (S-\lambda)^* \cap \ker (T-\mu)^*$ perpendicular to $f_j$ for all $j$. Thus $\langle q(S,T) f_j , f \rangle = 0 $ for all $j$ and for any $q$, and hence $\langle g , f \rangle = 0 $ for any $g \in \{\sum_{j=1}^k q_j(S,T)f_j : q_j \in \C[z,w] \}$, a contradiction. Therefore, $\dim \left(\ker (S-\lambda)^* \cap \ker (T-\mu)^* \right) \leq k.$
The last statement of the proposition follows from the definition of the rank.
\end{proof}

\begin{proposition}
 \label{prop:rank is stable}
\label{prop:keeptherank}
  Suppose $V=(S,T)$ is a finite bimultiplicity pure $\fp$-isopair with minimal polynomial $\fp$ and with rank $\alpha = (\alpha_1,...,\alpha_s) \in \N_+^s$ acting on a Hilbert space $\cK$. If $\cH$ is a finite codimension $V$-invariant subspace of $\cK$, then $W=V|_{\cH}$ has rank $\alpha$ too. 
\end{proposition}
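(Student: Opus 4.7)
The plan is to realize $W$ in the model of Theorem~\ref{steq} and to read off its rank from the characteristic polynomial factorisation in Proposition~\ref{prop:defrank}.

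First, replace $V$ by $(M_z, M_\Phi)$ on $\cK = H^2_{\C^M}$ via Theorem~\ref{steq}; since rank is a unitary invariant, this costs nothing. A codimension count, using that $\cK \ominus S\cH$ decomposes both as $(\cK \ominus S\cK) \oplus (S\cK \ominus S\cH)$ and as $(\cK \ominus \cH) \oplus (\cH \ominus S\cH)$, gives $\mult(S|_\cH) = M$, and symmetrically $\mult(T|_\cH) = N$, so $W$ is itself a pure $\fp$-isopair of bimultiplicity $(M,N)$. By Beurling--Lax--Halmos, $\cH = \Theta\, H^2_{\C^M}$ for some $M \times M$ matrix-valued inner function $\Theta$; finite codimension of $\cH$ forces $\Theta$ to be rational and analytic on $\overline{\mathbb D}$. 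Because $\cH$ is also $M_\Phi$-invariant, the containment $\Phi\Theta\, H^2_{\C^M} \subseteq \Theta\, H^2_{\C^M}$ lets one define $\Psi$ by $\Theta\Psi = \Phi\Theta$, yielding a bounded multiplier $M_\Psi$; since $\Theta$ and $\Phi$ are unitary-valued on $\mathbb T$ so is $\Psi$, and rationality of $\Psi$ follows from that of $\Theta$ and $\Phi$. The map $Uf = \Theta f$ is a unitary $H^2_{\C^M} \to \cH$ intertwining $(M_z, M_\Psi)$ with $W$, so $W$ is realized in the form of Theorem~\ref{steq}.

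Next, I would apply the conjugation identity
\[
  \det\bigl(wI_M - \Psi(z)\bigr) \;=\; \det\bigl(\Theta(z)^{-1}(wI_M - \Phi(z))\Theta(z)\bigr) \;=\; \det\bigl(wI_M - \Phi(z)\bigr),
\]
valid wherever $\det\Theta(z)\neq 0$ and thus as an identity of rational functions. Combined with Proposition~\ref{prop:defrank}(i) applied to both $V$ and $W$, the two factorisations $c\,\fp_1^{\alpha_1}\cdots\fp_s^{\alpha_s}$ and $c'\,\fp_1^{\beta_1}\cdots\fp_s^{\beta_s}$ of this common determinant must coincide up to scalar, forcing $\beta = \alpha$.

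The step I expect to be most delicate is verifying that $\Psi = \Theta^{-1}\Phi\Theta$ is a genuine bounded multiplier despite possible zeros of $\det\Theta$ inside $\mathbb D$; the $M_\Phi$-invariance of $\cH$ is precisely what is needed.
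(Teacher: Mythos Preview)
Your argument is correct and takes a genuinely different route from the paper's. The paper argues directly at the level of joint eigenspaces: writing $\cK=\cH\oplus\cF$ with $\cF$ finite dimensional and $S^*=\begin{pmatrix}S_0^*&0\\X^*&A^*\end{pmatrix}$, $T^*=\begin{pmatrix}T_0^*&0\\Y^*&B^*\end{pmatrix}$, it shows that for $(\lambda,\mu)\in\Vfp(\fp_j)$ regular and avoiding the finite set $\sigma(A)\times\sigma(B)$, the projection of $\ker(S-\lambda)^*\cap\ker(T-\mu)^*$ onto $\cH$ lands in $\ker(S_0-\lambda)^*\cap\ker(T_0-\mu)^*$ and is injective, giving $\dim\ge\alpha_j$; the matching bimultiplicity then forces equality. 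Your approach instead builds an explicit model for $W$: Beurling--Lax--Halmos produces $\cH=\Theta H^2_{\C^M}$ with $\Theta$ rational inner, invariance under $M_\Phi$ yields $\Psi$ with $\Theta\Psi=\Phi\Theta$, and the conjugation identity $\det(w-\Psi(z))=\det(w-\Phi(z))$ lets you read off the rank from Proposition~\ref{prop:defrank}(i). The paper's method is more self-contained (no appeal to Beurling--Lax--Halmos or to rationality of finite-codimension inner functions), while yours yields extra structural information---an explicit $(M_z,M_\Psi)$ realization of $W$ with $\Psi$ similar to $\Phi$ pointwise---and makes the invariance of rank transparent via the determinant. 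Your worry about the delicate step is well placed but correctly handled: the operator $U^*(M_\Phi|_\cH)U$ is bounded and commutes with $M_z$, hence is multiplication by some $\Psi\in H^\infty$, and the identity $\Theta\Psi=\Phi\Theta$ of analytic functions then shows $\Psi$ is rational with poles outside $\overline{\mathbb D}$.
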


\begin{proof}
Write $W= V|_{\cH} = (S_0,T_0)$. Let $\cF = \cK \ominus \cH$. Thus $\cF$ has finite dimension and $\cK = \cH \oplus \cF$. With respect to this decomposition, write
$$ S^* =\begin{pmatrix} S_0^* & 0 \\ X^* & A^* \end{pmatrix}, \ \  T^*=\begin{pmatrix} T_0^* & 0 \\ Y^* & B^* \end{pmatrix}.$$
Observe that $\sigma(A) \times \sigma(B)$ is a finite set since $A$ and $B$ act on a finite dimensional space. Fix $1\leq j\leq s$.   Let $\Gamma$ be the set of all $(\lambda,\mu) \in \Vfp(\fp_j)$ such that the dimension of $\ker(S-\lambda)^*  \cap \ker(T-\mu)^*$ is $\alpha_j$ and  $ (\lambda,\mu) \notin \sigma(A) \times \sigma(B)$. Hence by Proposition \ref{jointkernelconstant},  $\Gamma$ contains the cofinite  set of all regular points. Since also  the set $\sigma(A) \times \sigma(B)$ is finite, $\Gamma$ is a cofinite subset of $\Vfp(\fp_j)$. Fix $ (\lambda,\mu) \in \Gamma$ and let
$$\cL = \ker(S-\lambda)^* \cap \ker(T-\mu)^*\  \textrm{ and }\  \cL_0 =\ker(S_0-\lambda)^*\cap \ker(T_0-\mu)^*.$$
Let $\cP \subseteq \cH$ be the projection of $\cL$ onto $\cH$.  Given $f\in \cL$, write $f=f_1 \oplus f_2$, where $f_1 \in \cH$ and $f_2 \in \cF$. Since $f \in \cL$, the kernel of $(S_0-\lambda)^*$ contains $f_1$. Likewise the kernel of $(T_0-\lambda)^*$ contains $f_1$. Therefore, $\cP \subseteq \cL_0$.
If $\dim(\cL_0) < \alpha_j$, then, since $\dim(\cL) = \alpha_j$, there exists a non zero vector of the form $0 \oplus v$ in $\cL$ and hence $\ker(A-\lambda)^* \cap \ker(B-\mu)^*$ is non-empty. But, $\ker(A-\lambda)^* \cap \ker(B-\mu)^*$ is empty by the choice of $(\lambda, \mu)$. Thus $\dim(\cL_0) = \alpha_j$ for almost all $(\lambda,\mu)$  in $\Vfp(\fp_j)$. Therefore  $W$ also has rank $\alpha$.
\end{proof}

\begin{cor}
 \label{cor:nearlybound}
   Suppose $V=(S,T)$ is a finite bimultiplicity pure $\fp$-isopair with minimal polynomial $\fp$ and with rank $\alpha = (\alpha_1,...,\alpha_s) \in \N_+^s$ acting on a Hilbert space $\cK$. If $\cH$ is a finite codimension $V$-invariant subspace of $\cK$, then $W=V|_{\cH}$ is at least $\beta=\max\{\alpha_1,\dots,\alpha_s\}$-cyclic.  Hence $V$ is at least nearly $\beta$-cyclic.
\end{cor}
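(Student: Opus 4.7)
The plan is to combine the two preceding propositions directly. By Proposition \ref{prop:rank is stable}, since $\cH$ is a finite codimension $V$-invariant subspace of $\cK$, the restriction $W=V|_{\cH}$ is itself a pure $\fp$-isopair of finite bimultiplicity with the same rank $\alpha$. If $W$ were $k$-cyclic for some $k$, then applying Proposition \ref{prop:kgealpha+} to $W$ (whose rank is $\alpha$) forces $k\ge \max\{\alpha_1,\dots,\alpha_s\}=\beta$. Hence $W$ is at least $\beta$-cyclic, settling the first assertion.

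For the second assertion I would argue by contradiction. Suppose $V$ is nearly $\gamma$-cyclic for some $\gamma<\beta$, and choose witnessing vectors $f_1,\dots,f_\gamma\in\cK$. Let $\cH$ denote the closure in $\cK$ of $\{\sum_{j=1}^\gamma q_j(S,T)f_j : q_j \in \C[z,w]\}$. By the definition of nearly $\gamma$-cyclic, $\cH$ has finite codimension in $\cK$; since the set on which the closure is taken is manifestly closed under the actions of $S$ and $T$, and since $S$ and $T$ are bounded, $\cH$ is $V$-invariant. The vectors $f_1,\dots,f_\gamma$ are then a cyclic set for $W=V|_{\cH}$, so $W$ is $\gamma$-cyclic with $\gamma<\beta$, directly contradicting the first assertion applied to this particular $\cH$.

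There is no real obstacle: the substantive work has already been carried out in Propositions \ref{prop:rank is stable} and \ref{prop:kgealpha+}, and this corollary is an assembly of the two. The only routine checks are that the closure of polynomial combinations of the alleged nearly-cyclic vectors is an invariant subspace of finite codimension (immediate from the definition together with boundedness of $S,T$), and that the rank used in applying Proposition \ref{prop:kgealpha+} to $W$ is the same $\alpha$ we started with, which is precisely the content of Proposition \ref{prop:rank is stable}.
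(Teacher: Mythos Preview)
Your proposal is correct and follows essentially the same route as the paper: invoke Proposition~\ref{prop:rank is stable} (=\ref{prop:keeptherank}) to transfer the rank to $W$, then Proposition~\ref{prop:kgealpha+} to bound the cyclicity from below. Your treatment of the second assertion merely unwinds explicitly the contrapositive that the paper states in one line (``each restriction of $V$ to a finite codimension invariant subspace is at least $\beta$-cyclic and hence $V$ is at least nearly $\beta$-cyclic''), so there is no substantive difference.
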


\begin{proof}
 By Proposition \ref{prop:keeptherank}, $W$ has rank $\alpha.$ By Proposition \ref{prop:kgealpha+}, $W$ is at least $\beta$-cyclic. Thus, each restriction of $V$ to a finite codimension invariant subspace  is at least $\beta$-cyclic and hence $V$ is at least nearly $\beta$-cyclic. 
\end{proof}

\section{THE CASE $\textbf{\em p}$  IS IRREDUCIBLE}

In this section $\fp$ is an irreducible square free inner toral polynomial of bidegree $(n,m).$  

A \df{rank $\alpha$-admissible kernel}  $\K$ over $\Vfp(\fp)$
consists of an $\alpha\times m\alpha$ matrix polynomial $Q$
and an  $\alpha \times n\alpha$ matrix polynomial $P$  such that 
\[
  \frac{Q(z,w) Q(\zeta,\eta)^*}{1-z\zeta^*}
   = \K((z,w),(\zeta,\eta)) = \frac{P(z,w)P(\zeta,\eta)^*}{1-w\eta^*},\ (z,w),(\zeta,\eta) \in \Vfp(\fp)
\]
 where $Q$ and   $P$ have full rank $\alpha$ at some point in $\Vfp(\fp)$. In particular, at some point $x\in \Vfp(\fp)$ the matrix
$\K(x,x)$ has full rank $\alpha$ \cite{JKM}.  An $\alpha\times \alpha$ matrix-valued kernel on a set $\Omega$ has {\it full rank} at $x \in \Omega$, if $\K(x,x)$ has full rank $\alpha$.\index{full rank} \index{full rank kernel}We refer to $(\K,P,Q)$ as an \df{$\alpha$-admissible triple}. \index{admissible triple}

Let $\cH^2(\K)$ denote the Hilbert space associated to the  rank $\alpha$ admissible kernel $\K$. For a point $y\in \Vfp(\fp)$, denote by $\K_y$ the $\alpha \times \alpha$ matrix
  function on $\Vfp(\fp)$ defined
 by $\K_y(x) =\K(x,y)$. Elements of $\cH^2(\K)$ are $\C^\alpha$ vector-valued functions on $\Vfp(\fp)$
 and the linear span of $\{\K_y \gamma : y\in \Vfp(\fp), \, \gamma\in\C^\alpha\}$
 is dense in $\cH^2(\K)$. Note that the operators $X$ and $Y$ determined densely on $\cH^2(\K)$ by
 $X \K_{(\lambda,\mu)}\gamma =\lambda^* \K_{(\lambda,\mu)}\gamma$ and 
 $Y \K_{(\lambda,\mu)}\gamma =\mu^* \K_{(\lambda,\mu)}\gamma$  are contractions.
 By   Theorem \eqref{thm:KtoST} item  \eqref{it:Xbdd} below, $X^*$ is a bounded operator on $\cH^2(\K)$. Further for $f\in \cH^2(\K)$,
\[
 \langle X^* f, \K_{\lambda,\mu} \gamma \rangle = \lambda \langle f(\lambda,\mu),\gamma\rangle.
\]
 Hence $X^*$ is the operator of multiplication by $z$ on $\cH^2(\K)$. Likewise, $Y^*$ is  a bounded operator on  $\cH^2(\K)$ and it is the multiplication by $w$ on $\cH^2(\K)$.

\begin{theorem}
\label{thm:KtoST}
 If $\K$ is a rank $\alpha$-admissible kernel over $\Vfp(\fp)$, then 
\begin{enumerate}[(i)]
 \item \label{it:Xbdd} $X$ is bounded on the linear span of $\{\K_y \gamma : y\in \Vfp(\fp), \, \gamma\in\C^\alpha\}$;
 \item 
 \label{it:znqej}
     for each $1\le j \le m\alpha$ and each positive integer $n$,  the vector $z^n Qe_j$  ($Qe_j$ is the $j$-th column of $Q$)
 lies in $\cH^2(\K)$; 
 \item \label{it:szegoQ*}
 the span of $\displaystyle{\left\{s_\lambda Q(\lambda,\mu)^* \gamma : (\lambda,\mu) \in \Vfp(\fp), \gamma \in \mathbb{C}^\alpha\right\}}$ is dense in $\cH^2_{\mathbb{C}^{m\alpha}}$;
 \item
  \label{it:onB}
   the set $\mathscr B=\{z^n Qe_j: n\in\mathbb N,\, 1\le j\le  m\alpha\}$  is an orthonormal basis for $\cH^2(\K)$; and
   \item
  \label{it:STpureiso} operators $S$ and $T$ densely defined on $\mathscr  B$ by $Sf=zf$ and $Tf=wf$ extend to a pair of  pure isometries on $\cH^2(\K)$.
  \end{enumerate} 
\end{theorem}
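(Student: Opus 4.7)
The overall plan is to establish a unitary equivalence $\Phi\colon\cH^2(K)\to\cH^2_{\C^{m\alpha}}$, densely defined by $\Phi(K_{(\lambda,\mu)}\gamma)=s_\lambda Q(\lambda,\mu)^*\gamma$, from which all five items will fall out. For item \eqref{it:Xbdd} I would compute directly: with $f=\sum_i K_{y_i}\gamma_i$ and $y_i=(\lambda_i,\mu_i)$,
\[
\|f\|^2-\|Xf\|^2=\sum_{i,j}(1-\lambda_j\overline{\lambda_i})\gamma_j^*K(y_j,y_i)\gamma_i=\Bigl\|\sum_i Q(y_i)^*\gamma_i\Bigr\|_{\C^{m\alpha}}^{\,2}\ge 0,
\]
where the identity $(1-z\overline{\zeta})K=QQ^*$ is used at the second step. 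Hence $X$ is a contraction on the dense span and extends boundedly, so $S=X^*$ (multiplication by $z$) is defined on all of $\cH^2(K)$. The same display also exhibits a bounded operator $A\colon\cH^2(K)\to\C^{m\alpha}$ with $AK_y\gamma=Q(y)^*\gamma$; a one-line reproducing-kernel computation gives $(A^*v)(y)=Q(y)v$, so $Qe_j=A^*e_j$ lies in $\cH^2(K)$ and $z^nQe_j=S^n(Qe_j)$ lies in $\cH^2(K)$ since $S$ is bounded, proving \eqref{it:znqej}.

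Next I would verify that $\Phi$ is isometric by the parallel computation: both $\langle \Phi K_{y_i}\gamma_i,\Phi K_{y_j}\gamma_j\rangle_{\cH^2_{\C^{m\alpha}}}$ and $\langle K_{y_i}\gamma_i,K_{y_j}\gamma_j\rangle_K$ reduce to $\gamma_j^*K(y_j,y_i)\gamma_i$. Item \eqref{it:szegoQ*} is then the assertion that $\Phi$ is surjective; equivalently, the adjoint $\Phi^*\colon\cH^2_{\C^{m\alpha}}\to\cH^2(K)$, which by another brief calculation acts by $(\Phi^*g)(z,w)=Q(z,w)g(z)$, is injective. An element $g\in\ker\Phi^*$ satisfies $Q(\lambda,\mu)g(\lambda)=0$ for every $(\lambda,\mu)\in\Vfp(\fp)$; stacking these relations over the $m$ roots $\mu_1(\lambda),\dots,\mu_m(\lambda)$ of $\fp_\lambda$ yields an $m\alpha\times m\alpha$ polynomial matrix $\hat Q(\lambda)$ with $\hat Q(\lambda)g(\lambda)=0$, so injectivity of $\Phi^*$ is equivalent to the generic invertibility of $\hat Q$ on $\mathbb D$. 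I expect this generic invertibility to be the main obstacle. My plan is to argue by contradiction: if $\det\hat Q\equiv 0$, clearing denominators produces a nonzero polynomial vector $v(z)\in\C[z]^{m\alpha}$ with $Q(z,w)v(z)$ vanishing on $\Vfp(\fp)$, hence divisible by $\fp$ componentwise by Lemma~\ref{lem:divide}\eqref{it:divide}. Combining the rank-$\alpha$ hypothesis on $Q$ at some point with the companion identity $(1-w\overline{\eta})Q(z,w)Q(\zeta,\eta)^*=(1-z\overline{\zeta})P(z,w)P(\zeta,\eta)^*$ on $\Vfp(\fp)\times\Vfp(\fp)$---obtained by equating the two factorizations of $K$---should deliver the desired contradiction.

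Once $\Phi$ is unitary the remaining items are immediate. The identity $\Phi^*(z^n e_j)=z^nQe_j$ transports the canonical orthonormal basis $\{z^n e_j\}$ of $\cH^2_{\C^{m\alpha}}$ to $\mathscr B$, proving \eqref{it:onB}. The intertwining $S\Phi^*=\Phi^*M_z$, verified from $Q(z,w)\cdot zg(z)=z\cdot Q(z,w)g(z)$, exhibits $S$ as unitarily equivalent to the classical vector-valued shift $M_z$ on $\cH^2_{\C^{m\alpha}}$ and hence as a pure isometry. For $T$, the analogous unitary $\Psi\colon\cH^2(K)\to\cH^2_{\C^{n\alpha}}$ built from $P$ and the identity $(1-w\overline{\eta})K=PP^*$ intertwines $T$ with $M_w$ on $\cH^2_{\C^{n\alpha}}$, so $T$ is likewise a pure isometry, completing \eqref{it:STpureiso}.
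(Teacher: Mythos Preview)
Your treatment of items \eqref{it:Xbdd}, \eqref{it:onB}, and \eqref{it:STpureiso} coincides with the paper's. Your argument for \eqref{it:znqej} is a pleasant alternative: the paper checks directly that the kernel $K-(z\bar\zeta)^nQQ^*$ is positive semidefinite, whereas you extract from the identity $\|f\|^2-\|Xf\|^2=\|Af\|^2$ a bounded operator $A$ with $A^*e_j=Qe_j$, and then apply the already-bounded $S=X^*$. Both routes are valid.

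The gap is in item \eqref{it:szegoQ*}. You correctly reduce surjectivity of $\Phi$ to the generic invertibility of the stacked matrix $\hat Q(\lambda)$; the paper does the same (calling the matrix $R(\lambda)$) and then simply invokes \cite[Lemma~4.1]{JKM} for that invertibility. Your proposed direct argument, however, does not go through as written. The matrix $\hat Q(\lambda)$ is \emph{not} a polynomial matrix in $\lambda$: its block rows are $Q(\lambda,\mu_i(\lambda))$, and the $\mu_i(\lambda)$ are algebraic (not rational) functions of $\lambda$, so there are no denominators to clear and no evident way to produce a polynomial null vector $v(z)\in\C[z]^{m\alpha}$ from $\det\hat Q\equiv 0$. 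One can repair this step by replacing $\hat Q(\lambda)$ with the genuinely rational matrix representing $v\mapsto Q(\lambda,w)v \bmod \fp_\lambda(w)$ in the basis $1,w,\dots,w^{m-1}$ of $\C[w]/(\fp_\lambda)$, but you have not done so. Even granting a polynomial $v$ with $\fp\mid Q(z,w)v(z)$, your final step is too vague: at every point of $\Vfp(\fp)$ the kernel of $Q$ has dimension $(m-1)\alpha$, so $v(z_0)$ lying in that kernel is no contradiction, and you have not indicated how the $P$--$Q$ identity actually enters. You should either supply the missing argument (effectively reproving \cite[Lemma~4.1]{JKM}, which uses the admissible-kernel structure more substantially than your sketch suggests) or cite that lemma as the paper does.
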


\begin{proof}
 For a finite set of points $(\lambda_1,\mu_1),...,(\lambda_n,\mu_n)  \in \Vfp(\fp)$, and $\gamma_1,...,\gamma_n \in \C^{\alpha}$, observe that
\[
\begin{split}
 \langle (I-X^*X)\sum_{j=1}^n \K_{(\lambda_j,\mu_j)} \gamma_j , \sum_{k=1}^n \K_{(\lambda_k,\mu_k)} \gamma_k \rangle = & \sum_{j,k=1}^n \langle(1-\lambda_k \overline{\lambda_j}) \K_{(\lambda_j,\mu_j)}(\lambda_k,\mu_k) \gamma_j, \gamma_k \rangle\\
        = & \sum_{j,k=1}^n \langle Q(\lambda_k,\mu_k)Q^*(\lambda_j,\mu_j) \gamma_j, \gamma_k \rangle\\
        = & \  \langle \sum_{j=1}^n Q^*(\lambda_j,\mu_j) \gamma_j , \sum_{k=1}^n Q^*(\lambda_k,\mu_k) \gamma_k \rangle \\
        \geq & \ 0.
\end{split}
\]
Therefore, $X$ is bounded on the linear span of $\{\K_y \gamma : y\in \Vfp(\fp), \, \gamma\in\C^\alpha\}$.

 To prove item \eqref{it:znqej}, note that by  \cite[Theorem 4.15]{VIP}, if $f$ is a $\mathbb{C}^{\alpha}$ valued function defined on $\Vfp(\fp)$ and if $\K((z,w),(\zeta,\eta)) - f(z,w)f(\zeta,\eta)^*$ is a (positive semidefinite) kernel function  then $f \in \cH^2(\K)$. Since
 \[
\begin{split}
  \K((z,w),(\zeta,\eta))- (z\zeta^*)^n Q(z,w)Q^*(\zeta,\eta)  = & \sum_{j=1}^{n-1} (z\zeta^*)^j Q(z,w)Q^*(\zeta,\eta) \\   + &
  (z\zeta^*)^{n+1}\K((z,w),(\zeta,\eta))
\end{split}
\]
 is positive semidefinite, it follows that  $z^n Qe_j \in \cH^2(\K).$  
 
 By  a result in  \cite[Lemma 4.1]{JKM}, there exists a cofinite subset $\Lambda \subset \mathbb D$ 
 such that for each  $\lambda\in \Lambda$ there
 exist distinct points $\mu_1,\dots,\mu_m \in\mathbb D$ such that 
 $(\lambda,\mu_j)\in \Vfp(\fp)$ and the $m\alpha\times m\alpha$ matrix,
\[
  R(\lambda):=\begin{pmatrix} Q(\lambda,\mu_1)^* & \dots & Q(\lambda,\mu_m)^*\end{pmatrix}
\]
 has full rank.  Define a map $U$ from $\cH^2(\K)$ to $\cH^2_{\C^{m\alpha}}$ by
\[
 U \K_{(\lambda,\mu)} (z,w) \gamma = s_\lambda(z) Q(\lambda,\mu)^* \gamma.
\]
Observe that for  $(\lambda_1,\mu_1), (\lambda_2,\mu_2) \in \mathbb{D}^2$  and $\gamma, \delta \in \mathbb{C}^\alpha$,
\[
\begin{split}
\langle U\K_{(\lambda_1,\mu_1)} (z,w) \gamma , U\K_{(\lambda_2,\mu_2)} (z,w)\delta \rangle 
= & \langle s_{\lambda_1} (z) Q(\lambda_2,\mu_2)Q^*(\lambda_1,\mu_1) \gamma , s_{\lambda_2} (z)\delta \rangle \\
 = & \delta^* Q(\lambda_2,\mu_2)Q^*(\lambda_1,\mu_1) \gamma \langle s_{\lambda_1} (z) , s_{\lambda_2} (z) \rangle \\
 = & \dfrac{\delta^* Q(\lambda_2,\mu_2)Q^*(\lambda_1,\mu_1) \gamma}{1-\overline{\lambda_1}\lambda_2}\\
 = & \delta^* \K((\lambda_2,\mu_2),(\lambda_1,\mu_1)) \gamma\\
 = & \langle \K_{(\lambda_1,\mu_1)} (z,w) \gamma , \K_{(\lambda_2,\mu_2)} (z,w) \delta \rangle.
\end{split}
\]
Therefore, $U$ is an isometry and hence a unitary, onto its range. Given $\lambda \in \mathbb{D}$, the span of
\[
 \{U \K_{(\lambda,\mu_j)}\gamma: \mu_j\in Z(\fp_\lambda), \, \gamma \in\C^\alpha\}
\]
 is equal to  $s_\lambda$ times the span of 
\[
 \{ Q(\lambda,\mu_j)^* e_k : 1 \leq j \leq m , 1 \leq k \leq \alpha \} \subseteq \mathbb{C}^{m\alpha}.
\]
If $\lambda \in \Lambda$, then  $R(\lambda)$ has full rank. Thus for such $\lambda$, the span of  $\{ Q(\lambda,\mu)^* \gamma : \mu \mbox{ such that } (\lambda,\mu) \in \Gamma, \gamma \in \mathbb{C}^{\alpha}  \}$ is all of $\mathbb{C}^{m\alpha}$.  Since $\Lambda \subseteq \mathbb{D} $ is cofinite, $\{s_\lambda \C^{m\alpha} : \lambda \in \Lambda\}$ is dense in $\cH^2_{\mathbb{C}^{m\alpha}}$. Since, 
\[ 
\{s_\lambda \C^{m\alpha} : \lambda \in \Lambda\} \subseteq \mbox{span} \{s_\lambda Q(\lambda,\mu)^*\gamma : (\lambda,\mu) \in \Vfp(\fp), \gamma \in \mathbb{C}^\alpha\}, 
\]
 the span of $\{s_\lambda Q(\lambda,\mu)^*\gamma : (\lambda,\mu) \in \Vfp(\fp), \gamma \in \mathbb{C}^\alpha\}$ is also dense in $\cH^2_{\mathbb{C}^{m\alpha}}$, proving item \eqref{it:szegoQ*}.  Moreover, it proves that  $U$ is onto and hence  unitary. 
 
 Let $q_k$ denote the $k$-th column of $Q$. Thus $q_k=Qe_k$.  Note that, for any $a \in \mathbb{N}$ and $1\leq j \leq m\alpha$, 
\[
\begin{split}
    \langle U^*z^ae_j(\zeta,\eta) , e_k \rangle
    = & \langle U^*z^ae_j , \K_{(\zeta,\eta)}e_k \rangle \\
    = & \langle z^ae_j , U \K_{(\zeta,\eta)}e_k \rangle \\
    = & \displaystyle{\sum_{i=1}^{m\alpha} \langle z^a e_j , ( s_{\zeta}q_i^*(\zeta,\eta) e_k) e_i \rangle }\\
    = & \langle q_j(\zeta,\eta) {\zeta}^a, e_k \rangle\\
    = & \langle (z^a q_j)(\zeta,\eta) , e_k \rangle
\end{split}
\]
and hence it  follows that $ U^*z^ae_j = z^a q_j$ and $U z^a q_j = z^ae_j$. In particular, $\{z^a q_j : a\in\mathbb N, \, 1\le j \le m\alpha\}$
is an orthonormal basis for $\cH^2(\K)$ completing the proof of item \eqref{it:onB}.

To prove item \eqref{it:STpureiso},  observe that $M_zU =US$ on $\mathscr B$ and then extending to $\cH^2(\K)$, it is true on  $\cH^2(\K)$ too.
It is now evident that $S$ is a pure isometry of multiplicity $m\alpha$ with wandering
 subspace $\{Q\gamma: \gamma\in\C^{m\alpha}\}$ (the span of the columns of $Q$).  Likewise for $T$ by  symmetry. 
\end{proof}

 \begin{proposition}[\cite{AM2}]
 	\label{prop:finiterational} 
 	Suppose $\Phi$ is an  $M\times M$ matrix-valued  rational  inner function and the
 	pair $(M_z,M_\Phi)$ of multiplication operators on $\cH^2_{\C^M}$.  If the rank of the projection $I-M_\Phi M_\Phi^*$ is $N$,
 	then there exists a unitary matrix  $U$ of size $(M+N)\times (M+N)$,
 	\[
 	U = \begin{matrix} \begin{matrix} M & N \end{matrix}  & {}\\ 
 	\begin{pmatrix} A & B \\ C & D \end{pmatrix} &\begin{matrix} M\\ N\end{matrix}
 	\end{matrix},
 	\]
 	such that
 	\[
 	\Phi(z) = A + zB(I-zD)^{-1}C.
 	\]
 \end{proposition}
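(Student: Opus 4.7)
The plan is to use the ``lurking isometry'' transfer-function construction, exploiting the finite rank hypothesis to produce a finite dimensional unitary colligation.

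First I would produce a finite-rank Kolmogorov factorization of the defect kernel. Since $\Phi$ is rational inner, $M_\Phi$ is isometric and $I - M_\Phi M_\Phi^*$ is the orthogonal projection onto the model space $\cK_\Phi = \cH^2_{\C^M} \ominus \Phi\, \cH^2_{\C^M}$, whose dimension is therefore $N$. The reproducing kernel of $\cK_\Phi$ is
\[
 K_\Phi(z,w) = \frac{I_M - \Phi(z)\Phi(w)^*}{1 - z w^*},
\]
and assembling an orthonormal basis for $\cK_\Phi$ into the columns of an $M \times N$ rational matrix function $E(z)$ yields the Kolmogorov factorization $K_\Phi(z,w) = E(z) E(w)^*$ on $\mathbb D \times \mathbb D$.

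Next I would clear the denominator and rearrange to obtain the block identity
\[
 \begin{pmatrix} \Phi(z) & E(z) \end{pmatrix} \begin{pmatrix} \Phi(w)^* \\ E(w)^* \end{pmatrix} = \begin{pmatrix} I_M & z E(z) \end{pmatrix} \begin{pmatrix} I_M \\ w^* E(w)^* \end{pmatrix},
\]
valid for all $z, w \in \mathbb D$. Reading this as equality of the Gram matrices of two families of $M$-columns in $\C^M \oplus \C^N$, I obtain a densely defined isometry $V$ carrying the columns of $\begin{pmatrix} I_M \\ w^* E(w)^* \end{pmatrix}$ to those of $\begin{pmatrix} \Phi(w)^* \\ E(w)^* \end{pmatrix}$. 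Since both spans live in the same finite dimensional ambient space $\C^M \oplus \C^N$ and their Gram matrices agree (forcing equal dimensions of the two spans), their orthogonal complements match in dimension and $V$ extends to a unitary on all of $\C^M \oplus \C^N$.

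Finally, writing this unitary as $V = U^*$ with $U = \begin{pmatrix} A & B \\ C & D \end{pmatrix}$ in the block form prescribed by the decomposition $\C^M \oplus \C^N$, and reading off the identity $V \begin{pmatrix} I_M \\ w^* E(w)^* \end{pmatrix} = \begin{pmatrix} \Phi(w)^* \\ E(w)^* \end{pmatrix}$ block by block and taking adjoints, yields the two relations
\[
 \Phi(w) = A + w\, E(w)\, C, \qquad E(w) = B + w\, E(w)\, D.
\]
The second inverts to $E(w) = B(I_N - wD)^{-1}$, which is well defined on $\mathbb D$ because $D$ is a contraction as a block of a unitary; substituting into the first gives $\Phi(z) = A + zB(I_N - zD)^{-1} C$. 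The main subtle point in executing this plan is the passage from the defect kernel identity to the extension of the densely defined isometry to a unitary on the full ambient $\C^M \oplus \C^N$, which is precisely the step where the finite rank assumption on $I - M_\Phi M_\Phi^*$ is essential: it is what makes the ambient space finite dimensional and the dimension-counting argument for the unitary extension automatic.
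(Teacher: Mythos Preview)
Your proposal is correct and follows essentially the same lurking isometry argument as the paper. The paper's $F$ (an $N\times M$ matrix of rows from an orthonormal basis of the defect space) plays the role of your $E^*$, and the paper defines its unitary $U$ directly on vectors $\begin{pmatrix}\gamma\\ \mu F(\mu)\gamma\end{pmatrix}$ rather than passing through an adjoint $V=U^*$ as you do; these are cosmetic variants of the same construction leading to the same transfer-function realization.
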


\begin{proposition}
\label{prop:rankV}
 If $V=(S,T)$ is a finite bimultiplicity $(M,N)$ pure $\fp$-isopair of rank  $\alpha$, modeled as $(M_z,M_\Phi)$ 
 on $\cH^2_{\C^M}$, where $\Phi$ is an
  $M\times M$ matrix-valued   rational  inner function, then $M=m\alpha$ and 
\begin{enumerate}[(i)]
 \item there exists an $\alpha \times m\alpha$
 matrix  polynomial $Q$ such that $Q(z,w)$ has full rank at almost all points of $\Vfp(\fp)$;
 \item
  \label{it:Q} 
    for $(z,w)\in \Vfp(\fp)$
\[
 Q(z,w)(\Phi(z)-w)=0;
\]
\item there exists an $\alpha \times n\alpha$
matrix  polynomial $P$ such that $P(z,w)$ has full rank at almost all points of $\Vfp(\fp)$ and an $\alpha$-admissible kernel $\K$ such that 
\[
 \frac{Q(z,w) Q(\zeta,\eta)^*}{1-z\zeta^*}
=   \K((z,w),(\zeta,\eta)) = \frac{P(z,w)P(\zeta,\eta)^*}{1-w\eta^*} \mbox{ on } \Vfp(\fp) \times \Vfp(\fp).
\]

 %\item \label{(S,T)equivalent} $(S,T)$ is unitarily equivalent to $(S_Q,T_Q)$, the operators  of multiplication by $z$ and $w$ on $\cH^2(\K^Q)$ for the kernel $\K^Q$ defined on $\Vfp(\fp)$ by
%\[
 % \K^Q((z,w),(\zeta,\eta)) = s(z,\zeta)Q(z,w)Q^*(\zeta,\eta).
%\]
\end{enumerate}
\end{proposition}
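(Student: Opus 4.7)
The sizes $M=m\alpha$ and $N=n\alpha$ follow directly from Proposition~\ref{prop:defrank}\eqref{it:rank} applied with $s=1$. To construct $Q$, the plan is to exploit the identity $\fp(z,\Phi(z))=0$ from Theorem~\ref{steq}\eqref{it:plambdazero} (valid throughout $\mathbb D$ by analyticity). Writing $\fp(z,w)=\sum_{k=0}^m a_k(z)\,w^k$ and telescoping $w^kI-\Phi(z)^k=(wI-\Phi(z))\sum_{j=0}^{k-1}w^{k-1-j}\Phi(z)^j$ produces an $M\times M$ polynomial matrix
\[
R(z,w):=\sum_{k=0}^m a_k(z)\sum_{j=0}^{k-1} w^{k-1-j}\Phi(z)^j,
\]
satisfying the division identity $R(z,w)(wI-\Phi(z))=\fp(z,w)\,I$. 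On $\Vfp(\fp)$ this specializes to $R(z,w)(\Phi(z)-w)=0$, so the rows of $R$ are left eigenvectors of $\Phi$. Since $R(\lambda,\mu)$ is a polynomial in $\Phi(\lambda)$, a spectral calculation at any $(\lambda,\mu)$ with $\Phi(\lambda)$ diagonalizable (Proposition~\ref{prop:defrank}\eqref{it:diagonalizable}) and $\partial_w\fp(\lambda,\mu)\ne 0$ (Lemma~\ref{lem:divide}\eqref{it:deriisfinite}) gives $R(\lambda,\mu)=\partial_w\fp(\lambda,\mu)\,\pi_\mu(\lambda)$, where $\pi_\mu(\lambda)$ is the spectral projection of $\Phi(\lambda)$ onto its $\mu$-eigenspace, which has rank $\alpha$ by Proposition~\ref{prop:defrank}\eqref{it:diagonalizable}. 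Hence $R$ has rank exactly $\alpha$ on a cofinite subset of $\Vfp(\fp)$; selecting $\alpha$ rows of $R$ that are linearly independent at one such point yields an $\alpha\times M$ polynomial matrix $Q$ with $Q(z,w)(\Phi(z)-w)=0$ on $\Vfp(\fp)$. Full rank of $Q$ off a finite set then follows from Lemma~\ref{lem:divide}\eqref{it:divide} applied to any $\alpha\times\alpha$ minor of $Q$ that is nonzero at the chosen point.

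The polynomial $P_0$ is produced symmetrically: applying Theorem~\ref{steq} with the roles of $S$ and $T$ exchanged models $V$ also as $(M_\Psi,M_w)$ on $\cH^2_{\C^N}$ for some $N\times N$ rational inner function $\Psi$, and repeating the construction above yields an $\alpha\times N$ polynomial matrix $P_0$ with $P_0(z,w)(\Psi(w)-z)=0$ on $\Vfp(\fp)$, of full rank off a finite set.

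Finally, define
\[
K\bigl((z,w),(\zeta,\eta)\bigr):=\frac{Q(z,w)Q(\zeta,\eta)^*}{1-z\zeta^*}.
\]
Positivity of $K$ is the identity
\[
\delta^*\,K\bigl((\zeta,\eta),(\lambda,\mu)\bigr)\,\gamma
=\bigl\langle s_\lambda\, Q(\lambda,\mu)^*\gamma,\; s_\zeta\, Q(\zeta,\eta)^*\delta\bigr\rangle_{\cH^2_{\C^M}},
\]
which realizes $K$ as the pullback of the Szeg\H o kernel of $\cH^2_{\C^M}$ along the polynomial parameterization $(\lambda,\mu,\gamma)\mapsto s_\lambda Q(\lambda,\mu)^*\gamma$ of the joint-kernel bundle $\ker(S-\lambda)^*\cap\ker(T-\mu)^*$. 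The remaining step, and the main obstacle, is the identity $K=\frac{PP^*}{1-w\eta^*}$ for a suitable polynomial $P$. The plan is to use the unitary $U:\cH^2_{\C^M}\to\cH^2_{\C^N}$ intertwining the two models of $V$; $U$ carries the joint kernels of the first model onto those of the second and therefore relates the two polynomial parameterizations by an $\alpha\times\alpha$ change-of-basis matrix function $A(\lambda,\mu)$, so that $P(\lambda,\mu):=A(\lambda,\mu)^*P_0(\lambda,\mu)$ yields the desired identity by unitarity of $U$. The hard part is verifying that $A$, and hence $P$, can be chosen polynomial in $(z,w)$ rather than merely rational; this reduces to a linear-algebra argument leveraging the irreducibility of $\fp$ together with the cofinite full-rank property of both $Q$ and $P_0$.
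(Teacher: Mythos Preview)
Your construction of $Q$ via the division identity $R(z,w)(wI-\Phi(z))=\fp(z,w)I$ is a genuinely different route from the paper's and is essentially correct. The paper instead fixes a regular point $(\lambda_0,\mu_0)$, conjugates $\Phi(\lambda_0)-\mu_0$ into block-diagonal form $\begin{pmatrix}0_\alpha&0\\0&A\end{pmatrix}$ by unitaries $\Pi_*,\Pi$, and reads off a rational left-null-space $\cQ(z,w)=\begin{pmatrix}I_\alpha&-G(z)(L(z)-w)^{-1}\end{pmatrix}\Pi_*$ from the Schur complement of the resulting $2\times 2$ block decomposition of $\Pi_*(\Phi(z)-w)\Pi$; clearing denominators gives $Q$. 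Your approach is more explicit and has the pleasant feature that the rank computation $R(\lambda,\mu)=\partial_w\fp(\lambda,\mu)\,\pi_\mu$ falls out of the spectral calculus. One correction: $\Phi(z)$ is rational in $z$, so your $R$ is not a polynomial matrix as stated; you must clear the denominator of $\Phi$ (a scalar polynomial in $z$ nonvanishing on $\overline{\mathbb D}$) before selecting rows.

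Your construction of $P$, however, has a real gap that you yourself flag. Building $P_0$ from the second model $(M_\Psi,M_w)$ and then trying to align it with $Q$ via the intertwining unitary $U:\cH^2_{\C^M}\to\cH^2_{\C^N}$ requires controlling the change-of-basis $A(\lambda,\mu)$ on the joint eigenspaces, and you give no argument that $A$ is polynomial (or even rational with removable singularities on $\Vfp(\fp)$). The paper avoids this entirely by invoking the Agler--McCarthy transfer-function realization (Proposition~\ref{prop:finiterational}): writing $\Phi(z)=A+zB(I-zD)^{-1}C$ with $\begin{pmatrix}A&B\\C&D\end{pmatrix}$ unitary of size $(M+N)\times(M+N)$, one simply \emph{defines} $P(z,w):=Q(z,w)B(I-zD)^{-1}$. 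A direct check using $Q(\Phi-w)=0$ gives $\begin{pmatrix}Q&zP\end{pmatrix}\begin{pmatrix}A&B\\C&D\end{pmatrix}=\begin{pmatrix}wQ&P\end{pmatrix}$ on $\Vfp(\fp)$, and unitarity then yields $QQ^*+z\zeta^*PP^*=w\eta^*QQ^*+PP^*$, which is exactly the admissible-kernel identity. Full rank of $P$ follows from full rank of $Q$ via that same identity at a single point. This is both shorter and complete; your proposed route would need substantial additional work to close.
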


\begin{remark}
   The triple $(\K,P,Q)$ in   Proposition \ref{prop:rankV} is a rank $\alpha$-admissible triple.  
\end{remark}

\begin{proof}
 Applying Corollary \ref{prop:defrank} to irreducible $\fp$ gives $M=m\alpha$.  Let   $\Lambda$ denote the  set of  $\lambda \in \mathbb{D}$ such that  $\fp_\lambda$ has $m$ distinct zeros. By Lemma \eqref{lem:divide} item \eqref{it:Lambda cofinite} $\Lambda$ is cofinite.   Let 
\[
 \Gamma =\{(\lambda,\mu): \lambda\in \Lambda, \, \mu \in Z(\fp_\lambda)\}.
\]
By Proposition \ref{alphaexist} item \eqref{it:diagonalizable}, for each $(\lambda,\mu) \in \Gamma$, the matrix  $\Phi(\lambda)$ is diagonalizable and $\Phi(\lambda)-\mu$ has an $\alpha$ dimensional kernel.
Now fix $(\lambda_0,\mu_0)\in \Gamma$. % Note that  generalized eigenspaces of $\Phi(\lambda_0)$ associated to $\mu_0$ have generalized dimension $\alpha$. In fact $\Phi(\lambda_0)$ is diagonalizable for $\mu_0$.
  Hence there exist  unitary matrices $\Pi$  and $\Pi_*$  such that
\[
 \Pi_*  (\Phi(\lambda_0)-\mu_0)\Pi 
 = \begin{pmatrix} 0_\alpha & 0 \\ 0 & A \end{pmatrix},
\] 
where $A$ is $(m-1)\alpha\times (m-1)\alpha$ and invertible.  Let 
\[
 \Sigma(z,w) = \Pi_*(\Phi(z)-w)\Pi. 
\]
 For $(\lambda,\mu)\in \Gamma,$ the matrix $\Sigma(z,w)$ has an $\alpha$ dimensional kernel.
Write,
\[
\Sigma(z,w)
 = \begin{pmatrix} E(z)-w & G(z)\\ H(z) & L(z)-w \end{pmatrix},
\]
 where $E$ is $\alpha\times \alpha$ and  $L$ is of size $(m-1)\alpha \times (m-1)\alpha$.
 By construction $L(z)-w$ is invertible at $(\lambda_0,\mu_0)$ and the other entries
 are $0$ there.  In particular, $L(\lambda)-\mu$ is invertible for almost all points
 $(\lambda,\mu)\in \Vfp(\fp)$. 
 Moreover, if $L(z)-w$ is invertible, then  
\[
 {\tiny  \Sigma(z,w) =  \begin{pmatrix} I&  G(z)  \\ 0 & L(z)-w \end{pmatrix}
 \begin{pmatrix} \Psi(z,w)  & 0 \\ 0 & I \end{pmatrix}
    \begin{pmatrix} I & 0 \\ (L(z)-w)^{-1} H(z) & I \end{pmatrix},}
\]
where 
\[
\Psi(z,w)=E(z)-w - G(z)(L(z)-w)^{-1} H(z).
\]
 Thus, on the cofinite subset of $\Vfp(\fp)$ where $L(\lambda)-\mu$ is invertible
 and $\Sigma(\lambda,\mu)$ has an $\alpha$ dimensional kernel, $\Psi(\lambda,\mu)=0$
 and moreover,
\[
  \begin{pmatrix} I_\alpha & - G(\lambda)(L(\lambda)-\mu)^{-1}  \end{pmatrix} 
     \Pi_*  (\Phi(\lambda)-\mu)=0.
\]
 Let 
\[
 \cQ(z,w) = \begin{pmatrix} I_\alpha & - G(z)(L(z)-w)^{-1}    \end{pmatrix} \Pi_*.
\]
 It follows that
\[
\cQ(z,w) (\Phi(z) -w) = 0
\]
for almost all points in $\Vfp(\fp)$. After multiplying  $\cQ$ by an appropriate scalar polynomial
 we obtain an $\alpha \times m\alpha$ matrix polynomial $Q(z,w)$ that has
 full rank at almost all points of $\Vfp(\fp)$ and satisfies
\[
 Q(z,w) (\Phi(z)-w) =0 
\] for all $(z,w)\in \Vfp(\fp)$. 
 
 Since $T$ has multiplicity $N$, the operator $M_\Phi$ also has multiplicity $N$  and hence the projection $I -M_\Phi M_\Phi^*$ has rank $N$. By Theorem \ref{prop:finiterational}, there exists a unitary matrix $U$ of size $(M+N)\times (M+N)$, 
 \[
 U = \begin{matrix} \begin{matrix} M & N \end{matrix}  & {}\\ 
 \begin{pmatrix} A & B \\ C & D \end{pmatrix} &\begin{matrix} M\\ N\end{matrix}
 \end{matrix},
 \]
 such that
 \[
 \Phi(z) = A + zB(I-zD)^{-1}C.
 \]
 Define $P$ by $P(z,w) = Q(z,w)B(I-zD)^{-1}$ and verify, for $(z,w)\in \Vfp(\fp)$, 
 \[
 \begin{pmatrix} Q & zP \end{pmatrix} \begin{pmatrix} A & B \\ C & D \end{pmatrix}
 = \begin{pmatrix} wQ & P \end{pmatrix}
 \textrm{ on } \Vfp(\fp).  \]
 It follows that, for $(\zeta,\eta) \in \Vfp(\fp)$, 
 \[
 Q(z,w)Q(\zeta,\eta)^* + z \zeta^* P(z,w)P(\zeta,\eta)^* = w\eta^* Q(z,w)Q(\zeta,\eta)^* + P(z,w)P(\zeta,\eta)^*.
 \]
 Rearranging gives,
 \[
 \frac{Q(z,w) Q(\zeta,\eta)^*}{1-z\zeta^*}  = \K((z,w),(\zeta,\eta)
 = \frac{P(z,w)P(\zeta,\eta)^*}{1-w\eta^*} \mbox{ on } \Vfp(\fp) \times \Vfp(\fp). 
 \]
 Finally, if $(\zeta,\eta)\in\Vfp(\fp)$ is such that $Q(\zeta,\eta)$ has full rank $\alpha$, then $P(\zeta,\eta)P(\zeta,\eta)^*$ also has full rank $\alpha$. Therefore, $P(\zeta,\eta)$ also has full rank $\alpha$ and hence $\K$ is a rank $\alpha$-admissable kernel.
 \end{proof}

\begin{theorem}
\label{thm:STtoK}
 If $V=(S,T)$ is a finite bimultiplicity $(M,N)$  pure $\fp$-isopair with  rank $\alpha$, then there exists a rank $\alpha$-admissible triple $(\K,P,Q)$
 such that $V$ is unitarily equivalent to the operators of multiplication by
 $z$ and $w$ on $\cH^2(\K)$.  
\end{theorem}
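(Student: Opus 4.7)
The plan is to combine the model for $V$ from Proposition \ref{prop:rankV} with the unitary $U$ constructed in the proof of Theorem \ref{thm:KtoST} to obtain the desired unitary equivalence, with the intertwining of the second coordinate being forced by the identity $Q(\Phi(z)-w)=0$.

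First, apply Theorem \ref{steq} to represent $V=(S,T)$ as $(M_z, M_\Phi)$ on $\cH^2_{\C^M}$ for an $M\times M$ matrix-valued rational inner function $\Phi$. Then invoke Proposition \ref{prop:rankV} to conclude $M=m\alpha$ and to produce polynomial matrices $P$ ($\alpha \times n\alpha$) and $Q$ ($\alpha \times m\alpha$), together with the rank $\alpha$-admissible kernel
\[
 K((z,w),(\zeta,\eta)) = \frac{Q(z,w)Q(\zeta,\eta)^*}{1-z\zeta^*} = \frac{P(z,w)P(\zeta,\eta)^*}{1-w\eta^*},
\]
on $\Vfp(\fp)\times\Vfp(\fp)$, so that $(K,P,Q)$ is a rank $\alpha$-admissible triple. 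The key identity carried along is
\[
 Q(z,w)\bigl(\Phi(z)-w\bigr)=0, \qquad (z,w)\in \Vfp(\fp).
\]

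Next, apply Theorem \ref{thm:KtoST} to $K$. In its proof a unitary $U:\cH^2(K)\to \cH^2_{\C^{m\alpha}}=\cH^2_{\C^M}$ is produced, satisfying $U^*(z^a e_j)= z^a q_j$, where $q_j=Qe_j$ is the $j$th column of $Q$ and $1\le j\le m\alpha$. I would show that $U^*$ intertwines $(M_z, M_\Phi)$ on $\cH^2_{\C^M}$ with the pair $(S',T')$ of multiplication by $z$ and $w$ on $\cH^2(K)$ guaranteed by Theorem \ref{thm:KtoST}\eqref{it:STpureiso}. Intertwining with multiplication by $z$ is immediate from $U^*(z^a e_j) = z^a q_j$, since then
\[
 S'U^*(z^a e_j) = z^{a+1}q_j = U^*(z^{a+1}e_j) = U^*M_z(z^a e_j)
\]
on the orthonormal basis $\{z^a e_j\}$ of $\cH^2_{\C^M}$.

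The main step is intertwining $M_\Phi$ with $T'=M_w$. Reading the identity $Q(z,w)\Phi(z)=wQ(z,w)$ column by column on $\Vfp(\fp)$ gives $wq_j(z,w) = \sum_{k=1}^{m\alpha} \Phi_{kj}(z)\,q_k(z,w)$, and this identity extends to $\cH^2(K)$ since multiplication by $z$ on $\cH^2(K)$ is a bounded operator with orthonormal basis of cyclic images of $q_k$. Consequently,
\[
 T' U^*(z^a e_j) = z^a\, w\, q_j = \sum_{k=1}^{m\alpha} z^a \Phi_{kj}(z)\,q_k = U^*\!\left(\sum_{k=1}^{m\alpha} z^a \Phi_{kj}(z)\, e_k\right) = U^* M_\Phi (z^a e_j).
\]
By density this gives $T' U^* = U^* M_\Phi$, completing the intertwining and hence the unitary equivalence between $V=(M_z, M_\Phi)$ on $\cH^2_{\C^M}$ and the pair of multiplications by $z$ and $w$ on $\cH^2(K)$.

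The main obstacle is justifying the column-wise identity $wq_j = \sum_k \Phi_{kj}\, q_k$ as an equality of elements of $\cH^2(K)$ (rather than merely as functions on $\Vfp(\fp)$) and verifying that $T'q_j$ equals this element; one does this by expanding $q_j$ and the right-hand side against the reproducing kernel $K_{(\lambda,\mu)}\gamma$ and using that $(\lambda,\mu)\in \Vfp(\fp)$ forces $Q(\lambda,\mu)\Phi(\lambda)=\mu Q(\lambda,\mu)$. Once this is in hand the rest of the argument is bookkeeping.
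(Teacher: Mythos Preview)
Your proposal is correct and takes essentially the same approach as the paper: both build the rank $\alpha$-admissible triple $(K,P,Q)$ via Proposition \ref{prop:rankV} and then use the unitary from Theorem \ref{thm:KtoST} (your $U^*$ is precisely the map the paper defines directly as $s_\zeta Q(\zeta,\eta)^*\gamma \mapsto K_{(\zeta,\eta)}\gamma$). The only cosmetic difference is that the paper verifies the intertwining on the adjoint side, checking $\tilde M_w^* U = U M_\Phi^*$ against kernel vectors using $\Phi(\zeta)^* Q(\zeta,\eta)^* = \bar\eta\, Q(\zeta,\eta)^*$, whereas you verify $T'U^* = U^* M_\Phi$ on the orthonormal basis $\{z^a e_j\}$ using the column identity $w q_j = \sum_k \Phi_{kj}(z) q_k$; these are dual versions of the same computation, and your ``obstacle'' dissolves once you note that elements of $\cH^2(K)$ are genuine functions on $\Vfp(\fp)$, so pointwise equality there is equality in the space.
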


\begin{proof}
 Note that $(S,T)$ is unitarily equivalent to $(M_z,M_\Phi)$ on $\cH^2_{\mathbb{C}^M}$, where $\Phi$ is an $M\times M$ matrix-valued rational inner function.  By Proposition \ref{prop:rankV}, there exists a rank $\alpha$-admissible triple $(\K,P,Q)$ such that   
 \begin{equation}\label{eq:before*}
 Q(z,w) (\Phi(z)-w) =0 
 \end{equation} 
 for all $(z,w)\in \Vfp(\fp)$. Define
 \[
 U:\cH^2_{\C^M} \to \cH^2(\K)
 \]
 on the span of 
 \[
 \mathcal B= \{ s_\zeta Q^*(\zeta,\eta)\gamma: (\zeta, \eta) \in \Vfp(\fp),  \  \gamma\in\C^\alpha\} \subseteq \cH^2_{\C^M}
 \]
 by
 \[
 U s_\zeta (z)Q^*(\zeta,\eta)\gamma  = \K_{(\zeta,\eta)} (z,w)\gamma.
 \]
 For $(\zeta, \eta) \in \Vfp(\fp)$ and $ \gamma_j \in \C^\alpha$ for $1 \leq j\leq 2$,  
 \[
 \begin{split}
 \langle U s_{\zeta_1}(z) Q^*(\zeta_1,\eta_1)\gamma_1, U s_{\zeta_2}(z) Q^*(\zeta_2,\eta_2)\gamma_2 \rangle 
 = & \langle \K_{(\zeta_1,\eta_1)} (z,w)\gamma_1 , \K_{(\zeta_2,\eta_2)} (z,w)\gamma_2 \rangle\\
 = & \langle \K_{(\zeta_1,\eta_1)} (\zeta_2,\eta_2)\gamma_1 , \gamma_2 \rangle\\
 = & \langle s_{\zeta_1} (\zeta_2)Q(\zeta_2,\eta_2)Q^*(\zeta_1,\eta_1)\gamma_1 , \gamma_2 \rangle \\
 = & \langle  s_{\zeta_1} (z)Q^*(\zeta_1,\eta_1)\gamma_1,  s_{\zeta_2}(z) Q^*(\zeta_2,\eta_2)\gamma_2 \rangle.
 \end{split}
 \] 
 Hence $U$ is an isometry. By Theorem \ref{thm:KtoST} item \eqref{it:szegoQ*} the span of $\mathcal B$ is dense in $\cH^2_{\mathbb{C}^M}$. Moreover, the
 range of $U$ is  dense in $\cH^2(\K)$. Thus,  $U$ is a unitary. Rewrite  \eqref{eq:before*} as,
\begin{equation}\label{eq:*}
w^*Q^*(z,w) = \Phi^*(z)Q^*(z,w).
\end{equation}

Let $\tilde{M}_z$ and $\tilde{M}_w$ be the operators of multipliction by $z$ and $w$ on $\cH^2(\K)$ respectively.  For $(\zeta , \eta) \in \Vfp(\fp)$  and $ \gamma \in \mathbb{C}^{\alpha}$, using \eqref{eq:*},  observe that, 
 \[
 \begin{split}
 \tilde{M}_w^*U(s_\zeta (z)Q^*(\zeta,\eta)\gamma) =  &\  \tilde{M}_w^*(\K_{(\zeta,\eta)} (z,w)\gamma) \\
 = &\  \bar{\eta}\K_{(\zeta,\eta)} (z,w)\gamma \\
 = & \  \bar{\eta}U(s_\zeta Q^*(\zeta,\eta)\gamma)\\
 = & \ U(s_\zeta (z)\bar{\eta}Q^*(\zeta,\eta)\gamma)\\
 = & \ U (s_\zeta (z){\Phi(\zeta)}^* Q^*(\zeta,\eta)\gamma)\\
 = & \ U M_{\Phi}^*(s_\zeta(z) Q^*(\zeta,\eta)\gamma).
 \end{split}
 \]
 Similarly, 
 \[  \tilde{M}_z^*U(s_\zeta (z)Q^*(\zeta,\eta)\gamma) =  UM_z^*(s_\zeta(z) Q^*(\zeta,\eta)\gamma).
 \]
 Therefore,
 $UM_z^* =\tilde{M}_z^*U$ and $UM_\Phi^*=\tilde{M}_w^*U$ on the span of $\mathcal B$, and hence on  $\cH^2_{\C^M}$.  Thus our original $(S,T)$ is unitarily equivalent to $(\tilde{M}_w,\tilde{M}_w)$  on $\cH^2(\K)$. 
 \end{proof}

\begin{definition}\rm
If $\cB$ is a subspace of vector space $\cX$, then the \df{codimension} of $\cB$ in $\cX$ is the dimension of the quotient space $\cX/\cB$.
\end{definition}

\begin{lemma}
 \label{lem:cofinite}
   Suppose $\cX$ is a vector space (over $\C$) and $\cQ $ and $\cB$ are subspaces of $\cX$. If $\cQ \subset \cB$ and $\cQ$ has finite codimension in $\cX$, then $\cQ$ has finite codimension in $\cB$.
\end{lemma}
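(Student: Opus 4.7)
The plan is to exploit the natural inclusion $\cB \hookrightarrow \cX$ at the level of quotients. Specifically, I would define a map $\iota : \cB/\cQ \to \cX/\cQ$ by $\iota(b+\cQ) = b+\cQ$ (viewing the right-hand side as a coset in $\cX/\cQ$). This is well-defined precisely because $\cQ \subset \cB$: two representatives of the same coset in $\cB/\cQ$ differ by an element of $\cQ$, which is still an element of $\cQ$ when we pass to $\cX/\cQ$. Linearity is immediate.

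Next I would verify that $\iota$ is injective. If $\iota(b+\cQ) = 0$ in $\cX/\cQ$, that means $b \in \cQ$, so $b+\cQ$ is the zero coset in $\cB/\cQ$ as well. Since $\cQ$ has finite codimension in $\cX$, the quotient $\cX/\cQ$ is finite-dimensional. Any subspace of a finite-dimensional vector space is finite-dimensional, so the image of $\iota$ is finite-dimensional, and by injectivity so is $\cB/\cQ$. Hence $\cQ$ has finite codimension in $\cB$, with
\[
  \dim(\cB/\cQ) \le \dim(\cX/\cQ).
\]

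There is no real obstacle here; the only thing to be careful about is to state clearly that $\cQ \subset \cB$ is what makes the map between quotients well-defined, and to avoid invoking a complement of $\cQ$ in $\cX$ (which would require a basis extension argument but yields the same bound).
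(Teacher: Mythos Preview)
Your proof is correct and takes essentially the same approach as the paper: the paper simply notes that $\cB/\cQ \subseteq \cX/\cQ$ and concludes immediately, while you have spelled out explicitly why this inclusion is well-defined and injective. The extra care you took in justifying the map between quotients is fine, but the underlying idea is identical.
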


%\begin{proof}
 % Since $\cQ$ has finite codimension in $\cX$ and $\cB/ \cQ \subseteq \cX/\cQ$, the result follows immediately.
%\end{proof}

\begin{lemma}
 \label{lem:cofinite+}
 Suppose $\cK$ is a Hilbert space and $\cQ \subset \cB \subset \cK$ are linear subspaces (thus not necessarily closed) and let $\overline{\cQ}$ denote the closure of $\cQ$. If $\cQ$ has finite codimension in $\cB$ and if $\cB$ is dense in $\cK$, then there exists a finite dimensional subspace $\cD$ of  $\cK$ such that $\cK = \overline{\cQ} \oplus \cD$.
\end{lemma}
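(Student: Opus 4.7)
The plan is to reduce the statement to showing that the closed subspace $\overline{\cQ}$ has finite codimension in $\cK$; once this is known, one may simply take $\cD = \overline{\cQ}^{\perp}$, which is finite dimensional and satisfies $\cK = \overline{\cQ} \oplus \cD$ as an orthogonal direct sum.

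To show $\overline{\cQ}$ has finite codimension in $\cK$, I would first exploit the hypothesis that $\cQ$ has finite codimension in $\cB$: choose vectors $v_1,\dots, v_n\in \cB$ whose images form a basis of the quotient $\cB/\cQ$, and let $\cD_0 = \operatorname{span}\{v_1,\dots,v_n\}\subset \cB \subset \cK$. Then $\cB = \cQ + \cD_0$. The next step is to form the subspace $\overline{\cQ}+\cD_0$ of $\cK$ and observe that it contains $\cQ+\cD_0=\cB$, which is dense in $\cK$ by assumption. So if I can show that $\overline{\cQ}+\cD_0$ is itself closed in $\cK$, then it equals all of $\cK$, and consequently $\overline{\cQ}$ has codimension at most $\dim \cD_0 < \infty$ in $\cK$.

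The substantive step is therefore the closedness assertion: the sum of a closed subspace and a finite dimensional subspace in a Banach space is closed. This is the standard lemma proved, for example, by passing to the quotient $\cK/\overline{\cQ}$, in which the image of $\cD_0$ is finite dimensional (hence closed), and then pulling back along the continuous quotient map. I would cite or briefly sketch this, since it is the one non-formal ingredient; everything else is bookkeeping.

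Finally, having established $\overline{\cQ}+\cD_0 = \cK$ with $\cD_0$ finite dimensional, I would take $\cD = \overline{\cQ}^{\perp}$. Then $\cD$ is closed and its dimension is at most $\dim \cD_0$, because the quotient map $\cK \to \cK/\overline{\cQ}$ restricts to an isomorphism from $\cD$ onto a subspace of the finite dimensional quotient. By the orthogonal decomposition theorem for Hilbert spaces, $\cK = \overline{\cQ} \oplus \cD$, completing the proof. The only place the Hilbert space structure enters is in this last step; in a general Banach space one would only obtain an algebraic direct complement, but $\cK$ being Hilbert lets us choose $\cD$ canonically as $\overline{\cQ}^{\perp}$.
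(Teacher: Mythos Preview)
Your proof is correct and follows essentially the same route as the paper's: both choose a finite dimensional complement $\cP$ (your $\cD_0$) of $\cQ$ in $\cB$ and then argue that $\overline{\cQ}$ together with (a piece of) $\cP$ fills out $\cK$. The only difference is cosmetic: where you invoke the standard ``closed plus finite dimensional is closed'' lemma and then set $\cD=\overline{\cQ}^{\perp}$, the paper carries out that step by hand, projecting $\cP$ onto $\overline{\cQ}^{\perp}$ to define $\cD=(I-P_{\overline{\cQ}})\cP$ and running a direct sequential argument to see that $\overline{\cQ}\oplus\cD=\cK$.
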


%\begin{proof}
%Since $\cQ$ has finite codimension in $\cB$, the quotient space $\cB/\cQ$ has finite dimension. Let $\{[p_1+\cQ],...,[p_k+\cQ]\}$, where $p_1,...,p_k \in \cB \setminus \cQ$, be a basis for $\cB/\cQ$. Let $\cP$ be the span of $p_1,...,p_k$.  Then  $\cQ + \cP  = \cB$.  Let $\bP$ denote the projection of $\overline{\cQ}$ onto $\cK$. Since $\cP$ is finite dimensional, $\cD = (I-\bP)\cP$ is finite dimensional and hence closed.  Given $h \in \cK$, there exists a sequence $(h_n)$ in $\cB$ that converges to $h$. Write $h_n = q_n + p_n$ where $q_n \in \cQ$ and $p_n \in \cP$. Note that $$h_n = q_n +p_n = q_n + P_{\overline{\cQ}} p_n +(I - P_{\overline{\cQ}})p_n =q_n^\prime + p_n^\prime,$$ where $q_n' =q_n + P_{\overline{\cQ}} p_n \in \overline{\cQ}$ and $p_n' = (I - P_{\overline{\cQ}})p_n \in \cD$. Since the sum is orthogonal,  both subspaces are closed, and the sum converges,  $q_n^\prime$ and $p_n^\prime$ converge to some $q\in\overline{\cQ}$ and $p\in \cD$ respectively. Therefore $\cK = \overline{\cQ} \oplus \cD$. 
%\end{proof}

\begin{theorem}\label{S,T nearly alpha cyclic}
If  $\K$ is a rank $\alpha$ admissible kernel function  defined on $\Vfp(\fp)$  and  $S=M_z$, $T=M_w$ are  the operators of multiplication by $z$ and $w$ respectively on $\cH^2(\K)$,  then the pair $(S,T)$  is  nearly $\alpha$-cyclic.
\end{theorem}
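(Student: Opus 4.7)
The claim that $\alpha-1$ vectors cannot suffice is immediate from Corollary \ref{cor:nearlybound}, since $(S,T)=(M_z,M_w)$ on $\cH^2(K)$ is a pure $\fp$-isopair of rank $\alpha$ (the admissibility of $K$ forces the joint kernels $\ker(S-\lambda)^*\cap\ker(T-\mu)^*$ at regular points $(\lambda,\mu)\in\Vfp(\fp)$ to have dimension $\alpha$ via the vectors $K_{(\lambda,\mu)}\gamma$ with $\gamma\in\mathbb{C}^\alpha$), and since $\fp$ is irreducible $\beta=\max\{\alpha_1\}=\alpha$. The substance of the proof is to exhibit $\alpha$ vectors whose cyclic span has finite codimension in $\cH^2(K)$.

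\textbf{Setup and choice of vectors.} Use the unitary $U$ of Theorem \ref{thm:KtoST} to pass to the model $(M_z,M_\Phi)$ on $\cH^2_{\mathbb{C}^{m\alpha}}$, under which $Q\mathbf v\in\cH^2(K)$ corresponds to the constant $\mathbf v\in\mathbb{C}^{m\alpha}$, where $\Phi$ is the $m\alpha\times m\alpha$ rational inner function from Theorem \ref{steq}. For $\lambda$ in the cofinite set $\Lambda$ of Lemma \ref{lem:divide}\eqref{it:Lambda cofinite}, Proposition \ref{prop:defrank}\eqref{it:diagonalizable} makes $\Phi(\lambda)$ diagonalizable with $m$ distinct eigenvalues $\mu_1(\lambda),\dots,\mu_m(\lambda)$ and eigenspaces $E_k^{(\lambda)}$ of dimension $\alpha$ with spectral projections $\pi_k^{(\lambda)}$. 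A genericity argument (each relevant $\alpha\times\alpha$ determinant is a polynomial in the entries of $(\mathbf v_1,\dots,\mathbf v_\alpha)$ and an analytic, not-identically-zero function of $\lambda$) produces $\mathbf v_1,\dots,\mathbf v_\alpha\in\mathbb{C}^{m\alpha}$ for which, at every $\lambda$ in some cofinite $\Lambda^\prime\subseteq\Lambda$ and every $k$, the tuple $(\pi_k^{(\lambda)}\mathbf v_1,\dots,\pi_k^{(\lambda)}\mathbf v_\alpha)$ forms a basis of $E_k^{(\lambda)}$. Set $f_j:=Q\mathbf v_j$.

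\textbf{Finite codimension via $R$-module structure.} Let $R:=\mathbb{C}[z,w]/(\fp)$ act on $\cH^2_{\mathbb{C}^{m\alpha}}$ via $(z,w)\mapsto(M_z,M_\Phi)$, which is well defined because $\fp(M_z,M_\Phi)=0$. With $\cV:=\mathrm{span}\{\mathbf v_1,\dots,\mathbf v_\alpha\}\subseteq\mathbb{C}^{m\alpha}$, define finitely generated $R$-submodules $\cB:=R\cdot\mathbb{C}^{m\alpha}\supseteq\cQ^\prime:=R\cdot\cV$ of $\cH^2_{\mathbb{C}^{m\alpha}}$. Since $\cB$ contains every polynomial $\mathbb{C}^{m\alpha}$-valued function, it is dense, and $U^{-1}\cQ^\prime$ is precisely the (non-closed) algebraic cyclic span $\{\sum_j p_j(S,T)f_j:p_j\in\mathbb{C}[z,w]\}$. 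At a regular point $(\lambda,\mu_k(\lambda))\in\Vfp(\fp)$, the fibre of $\cH^2_{\mathbb{C}^{m\alpha}}$ as an $R$-module is $\mathbb{C}^{m\alpha}/(\Phi(\lambda)-\mu_k(\lambda))\mathbb{C}^{m\alpha}\cong E_k^{(\lambda)}$, and the previous step ensures the image of $\cV$ fills this fibre for all $\lambda\in\Lambda^\prime$. Hence $\cB$ and $\cQ^\prime$ have matching fibres on a cofinite subset of $\Vfp(\fp)$, making $\cB/\cQ^\prime$ a finitely generated torsion $R$-module; since $R$ is a finite-type $\mathbb{C}$-algebra of Krull dimension one, $\cB/\cQ^\prime$ is finite-dimensional over $\mathbb{C}$. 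Applying Lemma \ref{lem:cofinite+} to $\cQ^\prime\subseteq\cB\subseteq\cH^2_{\mathbb{C}^{m\alpha}}$ yields $\cH^2_{\mathbb{C}^{m\alpha}}=\overline{\cQ^\prime}\oplus\cD$ with $\cD$ finite-dimensional, completing the proof. The hardest point is arranging the genericity uniformly in $\lambda$ on a cofinite subset, so that pointwise fibre-surjectivity of $\cQ^\prime$ propagates to a torsion quotient $\cB/\cQ^\prime$ and thence, via the lemmas on codimension, to a finite-codimension statement for the cyclic span.
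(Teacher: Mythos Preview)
Your argument is essentially correct and takes a genuinely different route from the paper's.  The paper works directly in $\cH^2(K)$ and chooses the $\alpha$ cyclic vectors explicitly: fixing $(\zeta,\eta)\in\Vfp(\fp)$ with $Q(\zeta,\eta)$ of full rank, it sets $Q_0=Q\,Q(\zeta,\eta)^*$ (an $\alpha\times\alpha$ matrix polynomial on $\Vfp(\fp)$) and uses its columns.  The adjugate identity $\tilde q\, e_j=Q_0\,\mathrm{Adj}(Q_0)e_j$ with $\tilde q=\det Q_0$ shows $\tilde q\cdot R^\alpha$ lies in the cyclic span, and since $\tilde q$ and $\fp$ are relatively prime (Lemma~\ref{lem:divide}\eqref{it:intersectD}), Proposition~\ref{prop:finite-codim}\eqref{it:finite} gives finite codimension immediately.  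Your approach instead picks the vectors generically and appeals to module theory over the one--dimensional domain $R=\C[z,w]/(\fp)$: matching fibres at cofinitely many points forces $\cB/\cQ'$ to be finitely generated torsion, hence finite--dimensional.  The paper's argument is more elementary and yields explicit cyclic vectors; yours is more conceptual but leans on Nakayama and the structure of torsion modules.

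Three points deserve tightening.  First, ``analytic, not identically zero'' in $\lambda$ gives only a discrete bad set, not a finite one; you need that the relevant determinant is actually a polynomial on $\Vfp(\fp)$ (indeed, under $U$ it becomes $\det[\,Q\mathbf v_1\mid\cdots\mid Q\mathbf v_\alpha\,]$), so that Lemma~\ref{lem:divide}\eqref{it:intersectD} yields cofiniteness of $\Lambda'$.  Second, the fibre you compute should be that of the finitely generated module $\cB$, not of $\cH^2_{\C^{m\alpha}}$; the identification $\cB/\mathfrak m\cB\cong\C^{m\alpha}/(\Phi(\lambda)-\mu)\C^{m\alpha}$ is correct but uses the relation $d(z)\,w\cdot e_j=d(z)\Phi(z)e_j$ in $\cB$ (with $d$ clearing denominators of $\Phi$) to get the upper bound on dimension.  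Third, your claim that $(S,T)$ has rank exactly $\alpha$ is only half justified: the vectors $K_{(\lambda,\mu)}\gamma$ give $\dim\bigl(\ker(S-\lambda)^*\cap\ker(T-\mu)^*\bigr)\ge\alpha$, while the reverse inequality follows most cleanly from the bimultiplicity $(m\alpha,n\alpha)$ established in Theorem~\ref{thm:KtoST} together with Proposition~\ref{prop:defrank}\eqref{it:rank}.
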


\begin{proof} 

Since $\K$ is a rank $\alpha$ admissible kernel, there exist matrix polynomials $Q$ and $P$ of size $\alpha \times m\alpha$ and $\alpha \times n\alpha$ respectively, such that  
$$\K((z,w),(\zeta,\eta))= \dfrac{Q(z,w)Q^*(\zeta,\eta)}{1-z\bar{\zeta}}=\dfrac{P(z,w)P^*(\zeta,\eta)}{1-w\bar{\eta}}, \ (z,w), (\zeta,\eta) \in \Vfp(\fp) $$ and $Q$ and $P$ have full rank $\alpha$ at some point in $\Vfp(\fp)$. Fix $(\zeta,\eta) \in \Vfp(\fp)$ so that $Q(\zeta,\eta)$ has full rank $\alpha$.  By  the definition of $\K$ and \cite[Lemma 3.3]{JKM},   $\K((z,w) ,(\zeta,\eta))$ has full rank $\alpha$ at almost all points in $\Vfp(\fp)$. Let  
\[Q_0=Q_0 (z,w) = Q(z,w)Q^*(\zeta,\eta).
\]
Then $Q_0 e_j = (1-S\overline{\zeta})\K_{(\zeta,\eta)} e_j$.  By Theorem \eqref{thm:KtoST} item \eqref{it:znqej},  $Q_0 e_j$, the  $j^{th}$ column of $Q_0$, is also in $\cH^2(\K)$.  Letting  $\tilde{q} =\tilde{q}(z,w)$ to be the determinant of $Q_0$, since $\K((z,w) ,(\zeta,\eta))$ has full rank $\alpha$ at almost all points in $\Vfp(\fp)$, $\tilde{q}$ is nonzero except for finitely many points in $\Vfp(\fp)$. Thus, $\fp$ and $\tilde{q}$ have only finitely many common zeros in $\Vfp(\fp)$. By Lemma \ref{lem:divide} item \eqref{it:intersectD}, $\fp$ and $q$ are relatively prime.  Let $I$ be the ideal generated by $\fp$ and $\tilde{q}$. By Proposition \ref{prop:finite-codim} item \eqref{it:finite},  $\mathbb{C}[z,w]/I$ is finite dimensional. Observe that 
\[ \displaystyle{ \tilde{q}_j = \tilde{q} e_j = Q_0\textrm{Adj} (Q_0) e_j = \sum_{k=1}^{\alpha} b_{kj}Q_0e_k \in \cH^2(\K)},
\]
 where $b_{kj}$ is the $(k,j)$-entry of $\textrm{Adj} (Q_0)$. If $\vec{r}$ is an $\alpha\times 1$ matrix polynomial with entries $r_j$, then
\begin{equation}\label{eq:tildeq} \displaystyle{ \vec{r}\tilde{q} = \sum_{j=1}^\alpha r_j \ \textrm{Adj} (Q_0)Q_0 e_j \in \cH^2(\K)}.\end{equation} Since $\mathbb{C}[z,w]/I$ is  finite dimensional, there is a finite dimensional subspace $\mathscr S \subseteq \mathbb{C}[z,w]$ such that 
\[
\{r\tilde{q}+s\fp+t \ |\  r,s\in \mathbb{C}[z,w] , t \in \mathscr S \} = \mathbb{C}[z,w].
\]
Therefore
\[ 
\left\{\vec{r}\tilde{q}+\vec{s}\fp+\vec{t} \ : \  \vec{r},\vec{s} \textrm{ are vector polynomials }, \vec{t} \in \bigoplus_1^\alpha \mathscr S \right\} = \bigoplus_1^\alpha \mathbb{C}[z,w].
\]
and hence  the span $\cQ$  of $ \{r_1\tilde{q}_1,..., r_\alpha \tilde{q}_\alpha\ : r_1,..,r_\alpha \in \mathbb{C}[z,w] \}$ is of finite codimension in ${\bigoplus_1^\alpha  \mathbb{C}[z,w]}$. 

Let $\mathcal B = \vee\{z^n Qe_j: n\in\mathbb {N},\, 1\le j\le  m\alpha\} \subseteq \bigoplus_1^\infty \mathbb{C}[z,w]$.  By equation (\ref{eq:tildeq}) $\cQ \subset \mathcal B$. By Lemma \ref{lem:cofinite}, $\cQ$ has finite codimension in ${\bigoplus_1^\alpha  \mathbb{C}[z,w]}$. Moreover, $\cB$ is dense in $\cH^2(\K)$ by Theorem \ref{thm:KtoST} item (\ref{it:onB}). Hence by Lemma \ref{lem:cofinite+}, the closure of $\cQ$  in $\cH^2(\K)$ has finite codimension in $\cH^2(\K)$. Equivalently,  the closure of $\{\sum_{j=1}^ \alpha r_j(S,T)\tilde{q}_j : r_j \in \mathbb{C}[z,w] \}$ is of finite codimension in $\cH^2(\K)$. Thus $(S,T)$ is  $\alpha$-cyclic  on $\bar{\cQ}$  and hence at most nearly $\alpha$-cyclic in $\cH^2(\K)$. 

Moreover, by Corollary  \ref{cor:nearlybound},  $(S,T)$ has rank  at most $\alpha$. For $(\zeta,\eta) \in \Vfp(\fp)$ and for $\gamma \in \C^{\alpha},$ note that 
\[
\K_{(\zeta,\eta)}\gamma \in \ker(M_z-\zeta)^* \cap \ker(M_w-\eta)^*.
\]
 Hence, if $(\zeta,\eta)\in \Vfp(\fp)$ is such that $\K_{(\zeta,\eta)}$ has full rank $\alpha$, then $\ker(M_z-\zeta)^* \cap \ker(M_w-\eta)^*$ has dimension at least $\alpha$. Therefore, $(S,T)$ has rank at least $\alpha$.  Thus $(S,T)$ has rank $\alpha$. By Corollary \ref{cor:nearlybound}, $(S,T)$ is at least nearly $\alpha$-cyclic and hence $(S,T)$ is  nearly $\alpha$-cyclic on $\cH^2(\K)$.
\end{proof}

\begin{proposition}
\label{prop:cyclicvrank}
If $V=(S,T)$ is a finite bimultiplicity pure $\fp$-isopair of rank  $\alpha$ acting on the Hilbert space $\cK$, then there exists a finite codimension $V$ invariant subspace $\cH$ of $\cK$ such that the restriction of $V$ to $\cH$ is $\alpha$-cyclic.
\end{proposition}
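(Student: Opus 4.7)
The plan is to assemble the preceding machinery and deduce the proposition almost directly. First, I would invoke Theorem \ref{thm:STtoK} to replace $V$, up to unitary equivalence, by the pair $(M_z, M_w)$ of multiplication operators on $\cH^2(K)$ for some rank $\alpha$-admissible triple $(K,P,Q)$. This reduces the problem to an explicit functional-analytic setting where the preceding concrete constructions apply.

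Next, I would apply Theorem \ref{S,T nearly alpha cyclic}, which asserts that $(M_z, M_w)$ on $\cH^2(K)$ is nearly $\alpha$-cyclic. By the definition of nearly $\alpha$-cyclic, there exist vectors $f_1, \dots, f_\alpha \in \cH^2(K)$ (the proof of Theorem \ref{S,T nearly alpha cyclic} produces the explicit candidates $\tilde{q}_1, \dots, \tilde{q}_\alpha$) such that
\[
\cH := \overline{\left\{\sum_{j=1}^\alpha q_j(M_z, M_w)f_j : q_j \in \mathbb{C}[z,w]\right\}}
\]
has finite codimension in $\cH^2(K)$. This $\cH$ is invariant under both $M_z$ and $M_w$ (since the set inside the closure is stable under multiplication by $z$ and $w$), and each $f_j$ lies in $\cH$ (take $q_j$ constant). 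Setting $W := V|_\cH$, the vectors $f_1, \dots, f_\alpha$ are cyclic for $W$ on $\cH$, which shows that $W$ is at most $\alpha$-cyclic.

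To establish that $W$ is exactly $\alpha$-cyclic (not $(\alpha-1)$-cyclic), I would use the rank-stability results. By Proposition \ref{prop:keeptherank}, $W$ has the same rank $\alpha$ as $V$, since $\cH$ has finite codimension in $\cK$. If $W$ were $k$-cyclic for some $k < \alpha$, then Proposition \ref{prop:kgealpha+} would force $\operatorname{rank}(W) \leq k < \alpha$, a contradiction. Hence $W$ is exactly $\alpha$-cyclic, which completes the proof.

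There is no substantial obstacle beyond correctly invoking the already-established theorems; the proposition is essentially a packaging result. The only place one must be careful is to verify that the $\alpha$ nearly-cyclic vectors actually lie in the finite-codimensional subspace they generate (immediate from taking constant polynomials), so that they serve as honest cyclic vectors for the restriction $W$ rather than only for the ambient $V$.
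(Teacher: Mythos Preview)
Your proof is correct and follows essentially the same approach as the paper, which simply says ``Combine Theorems \ref{thm:STtoK} and \ref{S,T nearly alpha cyclic}.'' You have merely unpacked the details---in particular the verification that the restriction is \emph{exactly} $\alpha$-cyclic via Propositions \ref{prop:keeptherank} and \ref{prop:kgealpha+} (equivalently Corollary \ref{cor:nearlybound})---which the paper leaves implicit.
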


\begin{proof}
 Combine Theorems \ref{thm:STtoK} and \ref{S,T nearly alpha cyclic}. 
\end{proof}

\section{DECOMPOSITION OF FINITE RANK ISOPAIRS} \label{sec:5}

\begin{proposition}
	\label{prop:funnycyclic}
	Suppose $p_1,\, p_2\in \C[z,w]$ are relatively prime square free inner toral polynomials, but not necessarily irreducible. If $V_j=(S_j,T_j)$ are  $\beta_j$-cyclic $p_j$-pure isopairs, then $V=V_1\oplus V_2$ is a $p_1p_2$-isopair and is at most nearly  $\max\{\beta_1,\beta_2\}$-cyclic.  
\end{proposition}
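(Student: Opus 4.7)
The plan is to exhibit an explicit set of $\beta = \max\{\beta_1, \beta_2\}$ candidate cyclic vectors for $V = V_1\oplus V_2$ and then convert a purely algebraic finite-codimension statement (coming from the coprimality of $p_1$ and $p_2$) into the required analytic one via Lemma~\ref{lem:cofinite+}. Assume without loss of generality $\beta_1 \ge \beta_2$, so $\beta = \beta_1$. Let $\{f_1,\dots,f_\beta\}$ be a cyclic set for $V_1$ and $\{g_1,\dots,g_{\beta_2}\}$ a cyclic set for $V_2$; pad the latter by $g_{\beta_2+1} = \cdots = g_\beta = 0$. The proposed cyclic set for $V$ is $\{h_j : 1\le j\le \beta\}$ with $h_j = f_j \oplus g_j \in \cK_1 \oplus \cK_2$.

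Consider the linear subspaces
\[
\cQ = \Bigl\{\sum_{j=1}^\beta q_j(S,T) h_j : q_j \in \C[z,w]\Bigr\},
\quad
\cB = \Bigl\{\sum_{j=1}^\beta \bigl(r_j(S_1,T_1)f_j \oplus s_j(S_2,T_2)g_j\bigr) : r_j, s_j \in \C[z,w]\Bigr\}
\]
of $\cK_1\oplus\cK_2$. Then $\cQ\subseteq\cB$, and $\cB$ is dense in $\cK_1 \oplus \cK_2$ by $\beta_j$-cyclicity of each $V_j$ (the zero padding of $g_j$ contributes nothing). The subspace $\cQ$ corresponds to forcing the diagonal choice $(r_j, s_j) = (q_j, q_j)$, so the codimension of $\cQ$ in $\cB$ measures the failure of the ``diagonal'' to exhaust ``independent pairs.''

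The algebraic key is the linear map
\[
\Delta:\C[z,w] \longrightarrow \C[z,w]/(p_1) \oplus \C[z,w]/(p_2), \qquad q \mapsto (q + (p_1),\ q + (p_2)),
\]
whose image is precisely this diagonal. A standard exact sequence identifies the cokernel of $\Delta$ with $\C[z,w]/((p_1) + (p_2)) = \C[z,w]/I$, where $I$ is the ideal generated by $p_1$ and $p_2$; by Proposition~\ref{prop:finite-codim}\eqref{it:finite}, coprimality of $p_1, p_2$ yields $d := \dim\C[z,w]/I < \infty$. Because $p_j(S_j,T_j) = 0$, the action of $\C[z,w]$ on the orbit of $f_j$ (respectively $g_j$) factors through $\C[z,w]/(p_1)$ (respectively $\C[z,w]/(p_2)$), so $\Delta$ pushes forward to show that, for each $j$, the subspace $\{q(S,T) h_j : q \in \C[z,w]\}$ has codimension at most $d$ in $\{r(S_1, T_1)f_j \oplus s(S_2, T_2)g_j : r, s \in \C[z,w]\}$. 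Summing over $j$ gives that $\cQ$ has finite codimension, at most $\beta d$, inside $\cB$.

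An application of Lemma~\ref{lem:cofinite+} with $\cK = \cK_1\oplus\cK_2$ then yields that $\overline{\cQ}$ has finite codimension in $\cK_1\oplus\cK_2$, so $V$ is at most nearly $\beta$-cyclic with cyclic set $\{h_1,\dots,h_\beta\}$. The only delicate step is the passage from algebraic codimension in polynomial quotient rings to analytic codimension in the Hilbert space; Lemma~\ref{lem:cofinite+} is designed precisely for this passage, so once coprimality is exploited via $\Delta$ the remainder is essentially bookkeeping.
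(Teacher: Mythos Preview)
Your proof is correct and follows essentially the same approach as the paper's: both exploit the finite codimension of the ideal $(p_1,p_2)$ in $\C[z,w]$ (Proposition~\ref{prop:finite-codim}\eqref{it:finite}) to show that the ``diagonal'' orbit of $\{f_j\oplus g_j\}$ has finite codimension in the ``independent'' orbit, and then invoke Lemma~\ref{lem:cofinite+} to pass to the Hilbert space closure. The only difference is packaging: you phrase the algebraic step via the cokernel of the diagonal map $\Delta$, whereas the paper carries out the equivalent computation explicitly by writing $\psi_1-\psi_2=-s_1p_1+s_2p_2+r$ and setting $\varphi=\psi_1+s_1p_1$.
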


\begin{proof}
Clearly,$$p_1p_2(V)=(p_1(V_1)\oplus p_1(V_2))(p_2(V_1)\oplus p_2(V_2)) =(0\oplus p_1(V_2))(p_2(V_1)\oplus 0)=0.$$
Let $I$ be the ideal generated by  $p_1$ and $p_2$. By Proposition \ref{prop:finite-codim} item \eqref{it:finite}, $I$ has finite codimension in $\mathbb{C}[z,w]$. Hence there exists a finite dimension subspace $\mathcal R$ of $\mathbb{C}[z,w]$ such that, for each $\psi\in \C[z,w]$, there exist $s_1,s_2\in \C[z,w]$ and $r\in \mathcal R $ such that
\[
 \psi = s_1 p_1 + s_2 p_2 +r.
\]
 Let $\cK$ denote the Hilbert space that $V$ acts upon. Let $\beta=\max\{\beta_1,\beta_2\}$ and suppose without loss of generality $\beta_1=\beta_2=\beta$.  For $j=1,2$, choose cyclic sets $\Gamma_j =\{\gamma_{j,1}, \dots,\gamma_{j,\beta}\}$ for $V_j$. (In the case where $\beta_1 < \beta _2$ we can set  $\Gamma_1$ to be $\{\gamma_{1,1}, \dots,\gamma_{1,\beta_1}, 0, 0,  ... ,0\}$, so that  this new $\Gamma_1$ has $\beta=\beta_2$ vectors.) Let $\cK_0=\{\psi_1(V_1)\gamma_{1,k} \oplus \psi_2(V_2)\gamma_{2,k}: 1\le k\le \beta, \, \psi_j\in\C[z,w]\}$. By the hypothesis, $\cK_0$ is dense in $\cK$.
For given  polynomials $\psi_1 , \psi_2 \in \C[z,w]$,  there exist $s_1 , s_2 \in \C[z,w]$  and $r \in \mathcal R $ such that
\[
 \psi_1-\psi_2 = - s_1 p_1 + s_2 p_2 + r.
\]
 Rearranging gives,
\[
 \psi_1+s_1  p_1 = \psi_2+s_2  p_2 + r.
\]
 Let $\varphi = \psi_1 +s_1 p_1$. It follows that
\[
 \varphi =  \psi_2 + s_2 p_2 + r.
\]
 Consequently,
\[
 \begin{split} 
   \varphi(V) \left [\gamma_{1,k} \oplus \gamma_{2,k}\right ] = & \varphi(V_1)\gamma_{1,k} \oplus \varphi(V_2)\gamma_{2,k} \\
  = & \psi_1(V_1)\gamma_{1,k} \oplus (\psi_2 (V_2) \gamma_{2,k}  + r(V_2)\gamma_{2,k}).
\end{split}
\]
 Let $\cH_0$ denote the  span of $\{\varphi(V) \left [\gamma_{1,k} \oplus \gamma_{2,k}\right ] : 1 \leq k \leq \beta, \ \psi \in \mathbb{C}[z,w] \}$ and $\cH$ be the closure of $\cH_0$. Let $\mathcal L$ denote the span of $\{0\oplus r(V_2)\gamma_{2,k}:1\le k\le \beta, r\in \mathcal R\}$. Note that $\mathcal L$ is finite dimensional since $\mathcal R$ is and hence $\mathcal L$ is closed. Moreover, 
\[
 \cK_0 = \cH_0 + \mathcal L.
\]
 Hence $\cH_0$ has finite codimension in $\cK_0$. By Lemma \ref{lem:cofinite+}, $\cH$ has finite codimension in $\cK$. Evidently $\cH$ is  $V$ invariant and the restriction of $V$ to $\cH$ is at most  $\beta$-cyclic. Therefore, $V$ is at most nearly $\beta$-cyclic. 
 \end{proof}

\begin{proposition} 
\label{lem:funnycyclic}
  %\st{ If $V_j=(S_j,T_j)$ are  finite bimultiplicity pure $\fp_j$-isopairs with rank $\alpha_j$ acting on  Hilbert spaces $\cK_j$, for $1\leq j \leq s$, then $\bigoplus_{j=1}^s V_j$  is  nearly $\max\{\alpha_1,\alpha_2,...,\alpha_s \}$-cyclic on $\bigoplus_{j=1}^s\cK_j$. }
  
  If  $V_j=(S_j,T_j)$ are  finite bimultiplicity pure $\fp_j$-isopairs with rank $\alpha_j$ acting on  Hilbert spaces $\cK_j$, where  $\fp_j$ are  irreducible and relatively prime  inner toral polynomials for $1\leq j \leq s$,  then $\bigoplus_{j=1}^s V_j$  is  nearly $\max\{\alpha_1,\alpha_2,...,\alpha_s \}$-cyclic on $\bigoplus_{j=1}^s\cK_j$. 
\end{proposition}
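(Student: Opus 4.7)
The plan is to establish the two inequalities separately: show that $V := \bigoplus_{j=1}^s V_j$ is \emph{at most} nearly $\beta$-cyclic by an induction on $s$ built on Propositions \ref{prop:cyclicvrank} and \ref{prop:funnycyclic}, and show it is \emph{at least} nearly $\beta$-cyclic by computing its rank and applying Corollary \ref{cor:nearlybound}. Here $\beta=\max\{\alpha_1,\dots,\alpha_s\}$. The direct sum $V$ is a pure $\fp$-isopair (each summand is pure, annihilated by $\fp_j$, hence by $\fp=\fp_1\cdots\fp_s$) of finite bimultiplicity $(\sum M_j,\sum N_j)$, so Corollary \ref{cor:nearlybound} and Proposition \ref{prop:defrank}\eqref{it:rankVmainalt} apply to $V$.

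For the upper bound, induct on $s$. When $s=1$, Proposition \ref{prop:cyclicvrank} produces a finite codimension invariant subspace on which $V_1$ is $\alpha_1$-cyclic, so $V_1$ is nearly $\alpha_1$-cyclic. For the step, set $W=\bigoplus_{j=1}^{s-1}V_j$ and $\beta'=\max\{\alpha_1,\dots,\alpha_{s-1}\}$. The induction hypothesis gives a finite codimension $W$-invariant subspace $\cM$ with $W|_\cM$ being $\beta'$-cyclic; Proposition \ref{prop:cyclicvrank} applied to $V_s$ gives a finite codimension $V_s$-invariant subspace $\cH_s\subset\cK_s$ with $V_s|_{\cH_s}$ being $\alpha_s$-cyclic. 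Since $\fp$ is square free with pairwise distinct irreducible factors, $\fp_1\cdots\fp_{s-1}$ and $\fp_s$ are relatively prime, so Proposition \ref{prop:funnycyclic} applies to the pair $W|_\cM$ and $V_s|_{\cH_s}$, yielding that $(W|_\cM)\oplus(V_s|_{\cH_s})$ is at most nearly $\max\{\beta',\alpha_s\}=\beta$-cyclic. Because $\cM\oplus\cH_s$ is finite codimension in $\bigoplus_{j=1}^s\cK_j$ and invariant for $V$, a finite codimension invariant subspace of $\cM\oplus\cH_s$ where we achieve $\beta$-cyclicity is still finite codimension in $\bigoplus\cK_j$, so $V$ is at most nearly $\beta$-cyclic.

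For the lower bound, compute $\rank(V)$. Fix $j$ and a regular point $(\lambda,\mu)\in\Vfpj$ for $\fp$. Regularity of $(\lambda,\mu)$ for $\fp$ forces $(\lambda,\mu)\notin Z(\fp_k)$ for $k\ne j$ (otherwise $\nabla\fp$ would vanish at a crossing of components). For such $k$, any $f\in\ker(S_k-\lambda)^*\cap\ker(T_k-\mu)^*$ satisfies $\fp_k(S_k,T_k)^*f=\overline{\fp_k(\lambda,\mu)}f$, which forces $f=0$ since $\fp_k(S_k,T_k)=0$ and $\fp_k(\lambda,\mu)\ne 0$. For $k=j$, Proposition \ref{prop:defrank}\eqref{it:rankVmainalt} applied to the summand $V_j$ gives dimension exactly $\alpha_j$. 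Taking the direct sum, $\dim(\ker(S-\lambda)^*\cap\ker(T-\mu)^*)=\alpha_j$, so $V$ has rank $(\alpha_1,\dots,\alpha_s)$, and Corollary \ref{cor:nearlybound} gives that $V$ is at least nearly $\beta$-cyclic.

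The main obstacle is managing the inductive restriction carefully: each application of Proposition \ref{prop:funnycyclic} demands the summands be honestly cyclic, not merely nearly cyclic, so the induction requires first restricting $W$ to a further finite codimension subspace $\cM$ on which $W|_\cM$ becomes $\beta'$-cyclic, and then ensuring that the resulting $\cM\oplus\cH_s$ still sits as a finite codimension invariant subspace of $\bigoplus\cK_j$. The verification that $W|_\cM$ remains a pure $\fp_1\cdots\fp_{s-1}$-isopair (so that Proposition \ref{prop:funnycyclic} applies with relatively prime annihilators) uses the standard fact that invariant subspaces of pure isometries carry pure isometries.
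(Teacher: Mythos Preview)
Your proof is correct and follows essentially the same approach as the paper: induction on $s$ using Propositions \ref{prop:cyclicvrank} and \ref{prop:funnycyclic} for the upper bound, and Corollary \ref{cor:nearlybound} for the lower bound. Your explicit computation of $\rank(V)=(\alpha_1,\dots,\alpha_s)$ via the direct-sum decomposition is in fact more detailed than the paper's, which simply asserts the rank without justification.
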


\begin{proof}
 First suppose $s=2$. By Proposition \ref{prop:cyclicvrank}, each $V_j$ is $\alpha_j$-cyclic on  some finite codimensional invariant  subspace $\cH_j$ of $\cK_j$. By Proposition \ref{prop:funnycyclic}, $V_1|_{\cH_1} \oplus V_2|_{\cH_2}$ is at most  nearly $\max\{\alpha_1,\alpha_2\}$-cyclic on $\cH_1 \oplus \cH_2$.  Since each $\cH_j$ has finite codimension in $\cK_j$, it follows that $V=V_1\oplus V_2$ is at most  nearly $\max\{ \alpha_1,\alpha_2\}$-cyclic on $\cK_1 \oplus \cK_2$. On the other hand, $V$ has rank $(\alpha_1,\alpha_2)$ and hence, by Corollary \ref{cor:nearlybound}, is at least $\max\{\alpha_1,\alpha_2\}$-cyclic. Thus $V$ is nearly $\max\{\alpha_1,\alpha_2\}$-cyclic.
 
 Arguing by induction, suppose the result is true for  $0\le j-1<s.$ Thus  $V'= V_1 \oplus ...\oplus V_{j-1}$ is nearly $\beta = \max\{\alpha_1,\alpha_2,...,\alpha_{j-1} \}$-cyclic on $\cK'= \cK_1 \oplus \cK_2 \oplus ... \oplus \cK_{j-1}.$ Hence there exists a finite codimensional invariant  subspace $\cH'$ of $\cK'$ such that the restriction of $V'$ to $\cH'$ is $\beta$-cyclic. Since $V_j$ is a finite bimultiplicity $\fp_j$ isopair with rank $\alpha_j$, by Proposition \ref{prop:cyclicvrank},  there exists a finite codimensional invariant subspace $\cH_j$ of $\cK_j$ such that $V_j|_{\cH_j}$ is $\alpha_j$-cyclic. Note that $\fp_1...\fp_{j-1}$ and $\fp_j$ are relatively prime. Applying Proposition \ref{prop:funnycyclic} to $V'|_{\cH'}$ and $  V_j|_{\cH_j}$, it follows that $V'|_{\cH'} \oplus V_j|_{\cH_j}$ is at most nearly $\gamma=\max\{\beta, \alpha_j\}$-cyclic on $\cH' \oplus \cH_j$. Since $\cH'$ and  $\cH_j$ have finite codimension in $\cK'$ and $\cK_j$ respectively,  $W=V_1\oplus V_2 \oplus ... \oplus V_j$ is at most nearly  $\gamma$-cyclic on $\cK_1 \oplus \cK_2 \oplus \dots \oplus \cK_j$. On the other hand, $W$ has rank $(\alpha_1,...,\alpha_j)$ and  is therefore at least nearly $\gamma$-cyclic by Corollary \ref{cor:nearlybound}. Thus $W$ is nearly $\gamma = \max\{\alpha_1,...,\alpha_j\}$-cyclic.
 \end{proof}
 
%\begin{theorem}
 %\label{thm:nearly}
    %Suppose $V=(S,T)$ is a finite bimultiplicity pure $\fp$-isopair of rank $\alpha$ acting on the Hilbert space $\cK$  with minimal polynomial $\fp$.  Write $\fp=\fp_1\,\fp_2\, \cdots \fp_s$ as a product of distinct irreducible factors.  There exists a finite codimension subspace $\cL$ of $\cK$ such that the restriction of $V$ to $\cL$ is $\beta=\max\{ \alpha_1, \alpha_2,...,\alpha_s \}$-cyclic.
%\end{theorem}

%\begin{proof}
 %By \cite[Theorem 2.1]{AKM}, there exist a finite codimension   subspace $\cH$ of $\cK$ that is invariant for $V$ and pure $\fp_j$-isopairs $V_j$ such that
%\[
 %W = V|_\cH = V_1\oplus V_2 \oplus ...\oplus V_s.
%\]
 %By Proposition \ref{prop:keeptherank}, $W$ has rank $\alpha$. Hence $V_j$ has rank $\alpha_j$. By Proposition \ref{lem:funnycyclic}, there is a finite codimension invariant subspace $\cL$ of $\cH$ such that the restriction of $W$ to $\cL$ is $\beta=\max\{\alpha_1,\alpha_2,...,\alpha_s\}$-cyclic. Thus $\cL$ is a finite codimension subspace of $\cK$ such that $V|_{\cL}$ is $\beta$-cyclic. Hence $V$ is at most $\beta$-cyclic. By Corollary \ref{cor:nearlybound} $V|_{\cL}$ is at least  $\beta$-cyclic. Thus, $V|_{\cL}$ is  $\beta$-cyclic.
%\end{proof}

\begin{proof}[{Proof of Theorem \ref{thm:main}}]
By \cite[Theorem 2.1]{AKM}, there exist a finite codimension   subspace $\cH$ of $\cK$ that is invariant for $V$ and pure $\fp_j$-isopairs $V_j$ such that
\[
 W = V|_\cH = V_1\oplus V_2 \oplus ...\oplus V_s.
\]
 By Proposition \ref{prop:keeptherank}, $W$ has rank $\alpha$. Hence $V_j$ has rank $\alpha_j$. By Proposition \ref{lem:funnycyclic}, there is a finite codimension invariant subspace $\cL$ of $\cH$ such that the restriction of $W$ to $\cL$ is $\beta=\max\{\alpha_1,\alpha_2,...,\alpha_s\}$-cyclic. Thus $\cL$ is a finite codimensional subspace of $\cK$ such that $V|_{\cL}$ is $\beta$-cyclic. Hence $V$ is at most nearly  $\beta$-cyclic.  By Corollary \ref{cor:nearlybound}, $V$ is at least nearly $\beta$-cyclic. Therefore, $V$ is nearly $\max\{\alpha_1,\alpha_2,...,\alpha_s\}$ cyclic. 
\end{proof}

\begin{Cor}
   Suppose $V=(S,T)$ is a pure $\fp$-isopair of finite bimultiplicity   with minimal polynomial $\fp$ and  write $\fp=\fp_1\,\fp_2\, \cdots \fp_s$ as a product of distinct irreducible factors. If $V$ has  rank $\alpha$ and $\beta =\max\{\alpha_1,...,\alpha_s\}$,  then
 \begin{enumerate}[(i)]
  \item \label{it:1} there exists a finite codimension invariant subspace $\cH$ for $V$ such that the restriction of $V$ to $\cH$ is $\beta$-cyclic;
  \item \label{it:2} $V$ is not $k$-cyclic for any $k<\beta$; and
  \item \label{it:3} there exists a $\beta$-cyclic pure $\fp$-isopair $V^\prime$ and an invariant subspace $\cK$ for $V^\prime$ such that $V$ is the restriction of $V^\prime$ to $\cK$.
\end{enumerate}
\end{Cor}

\begin{proof}
Proofs of items \eqref{it:1} and \eqref{it:2} follow from Theorem \ref{thm:main} and the definition of nearly $k$-cyclic isopairs. The proof of item \eqref{it:3} is an application of item \eqref{it:1} and Corollary \ref{cor:nearlyuniequi}.  
\end{proof}

 \subsection{EXAMPLE}
 In this section we discuss an example on pure $\fp$-isopairs of finite rank to illustrate the connection of the  rank of a pure $\fp$-isopair to  nearly cyclicity and to the representation as direct sums. 
 
 Consider the irreducible, square free  inner toral polynomial,  $\fp =z^3-w^2$.  The distinguished variety, $\mathcal V$,  defines by $\fp$ is called \df{Neil parabola} \cite{JKM}.   The  triple $(\mathcal K_1 ,  Q_1, P_1)$ given by 
 \[
 Q_1(z,w) = \begin{pmatrix} 1 & w 
 \end{pmatrix}\]
 \[
 P_1(z,w) = \begin{pmatrix} 1 & z & z^2
 \end{pmatrix}\]
  and the corresponding kernel function
  \[
  \dfrac{1 +w\overline{\eta}}{1-z\overline{\zeta}} = \mathcal K_1 ((z,w),(\zeta,\eta)) = \dfrac{1+z\overline{\zeta}+z^2 \overline{\zeta}^2}{1-w\overline{\eta}},
  \]
  is a 1-admissible triple. Likewise for the choice of 
  \[
  Q_2(z,w) = \begin{pmatrix} z & w 
  \end{pmatrix}\]
  \[
  P_2(z,w) = \begin{pmatrix} w & z & z^2
  \end{pmatrix}\]
  and the corresponding kernel function
  \[
  \dfrac{z\overline{\zeta} +w\overline{\eta}}{1-z\overline{\zeta}} = \mathcal K_2 ((z,w),(\zeta,\eta)) = \dfrac{w\overline{\eta}+z\overline{\zeta}+z^2 \overline{\zeta}^2}{1-w\overline{\eta}},
  \]
 the triple $(\mathcal K_2,  Q_2, P_2)$ is also a 1-admissible triple. For  $j=1,2$, let $V_j$ be  the pair $ (M_z,M_w)$ defines on $H^2(\K_j)$. Now each  $V_j$ is a  pure $\fp$-isopair or rank 1 and  each $V_j$ is nearly 1-cyclic. 
 
 Let $Q=Q_1 \oplus Q_2$, $P=P_1 \oplus P_2$  and $\K = \K_1 \oplus \K_2$. Observe that the triple $(\K, Q,P)$ is a 2-admissible triple and $V=(M_z,M_w)$ define on $H^2(\K)$  is a pure $\fp$-isopair and nearly 2-cyclic. In fact $V$ is a  pure $\fp$-isopair of rank  2  that can be written  as direct sum of two pure $\fp$-isopairs, $V_1$ and $V_2$.  
 
 However this is not in true general. In other words, there exist  pure $\fp$-isopairs of finite rank (say $\alpha \in \mathbb{N}$),  that cannot be expressed as a direct sum of $\alpha$ number of pure $\fp$-isopairs.  For instance,  let 
 \[
 H' = \displaystyle{\{ f \in H^2(\K) : \langle f, {( 1 \ \  -z )}^{\top} \rangle =0\}}
 \]
 and $V' =  V|_{H'}$. Observe that $H'$ is a finite codimensional subspace of $H^2(\K)$ and $H'$ is invariant under $V$. By the stability of the rank $V'$ has rank 2 and hence nearly 2-cyclic. 
 
 Moreover, the collection of vectors of the form: 
 \[
 \begin{pmatrix} 1/{\sqrt 2} \\ z/ {\sqrt 2} \end{pmatrix} , \begin{pmatrix} z^n \\ 0 \end{pmatrix}_{n \geq 1},  \begin{pmatrix} wz^n \\ 0 \end{pmatrix}_{n \geq 0}, \begin{pmatrix}   0 \\ z^n \end{pmatrix}_{n \geq 2}, \begin{pmatrix} 0 \\ wz^n\end{pmatrix}_{n \geq 0}, 
 \]
forms an orthonormal basis for $H'$.   Hence the reproducing kernel , $\tilde{\K}$, for $H'$ has the form 
\[
\tilde{\mathcal K} ((z,w),(\zeta,\eta)) = \begin{pmatrix}
\dfrac{1}{2}+ \dfrac{z\overline{\zeta} +w\overline{\eta}}{1-z\overline{\zeta}} & \dfrac{\overline{\zeta}}{2}\\
\dfrac{z	}{2} & z\overline{\zeta} \left(\dfrac{1}{2} + \dfrac{z\overline{\zeta} +w\overline{\eta}}{1-z\overline{\zeta}}\right)
\end{pmatrix}.
\]
Since  $\tilde{\mathcal K} ((z,w),(0,0))$ is not diaganalizable, $\tilde{\K}$ is not diagonalizable.  Consequently, $H'$ and $V'$ are not  direct sums. In otherwords,  $V'$ is a pure $\fp$-isopair of rank 2 that cannot be expressed as a direct sum of two other pure $\fp$-isopairs.

\section{ACKNOWLEDGEMENTS}

I am very grateful to my adviser, Scott McCullough, for his valuable guidance and insights that greatly improved the content of this paper.

%\printindex

\end{document}